\newtheorem{thm}{Theorem}[section]
\newtheorem{cor}[thm]{Corollary}
\newtheorem{prop}[thm]{Proposition}
\newtheorem{lemma}[thm]{Lemma}
\newtheorem{sublemma}[thm]{Sublemma}
\theoremstyle{definition}
\newtheorem{defn}[thm]{Definition}
\newtheorem{prob}[thm]{Problem}
\theoremstyle{remark}
\newtheorem{rem}[thm]{Remark}
\newtheorem{obs}[thm]{Observation}
\def\E{{\mathscr E}}
\def\F{{\mathscr F}}
\def\G{{\mathscr G}}
\def\A{{\mathscr A}}
\def\B{{\mathscr B}}
\def\U{{\mathscr U}}
\def\C{{\mathscr C}}
\def\P{{\mathscr P}}
\def\iff{{\Longleftrightarrow}}
\begin{document}

\title{Square compactness and the Filter Extension Property}

\author{David Buhagiar} 
\address{David Buhagiar \\ Department of
Mathematics \\ Faculty of Science \\ University of Malta \\ Msida
MSD2080, Malta} 
\email{david.buhagiar@um.edu.mt} 

\author{Mirna D\v{z}amonja}\address{Mirna D\v{z}amonja \\ School of
Mathematics \\ University of East Anglia \\ Norwich NR4 7TJ, UK}
\email {M.Dzamonja@uea.ac.uk}

\date{\today}

\thanks{Mirna D\v{z}amonja thanks the University of Malta for their
hospitality during summers of 2017, 2018 and 2019, when this work was
done, and the Isaac Newton Institute in Cambridge, UK and Simons
Foundation for their support during the {\em Mathematical,
Foundational and Computational Aspects of the Higher Infinite (HIF)
programme}, where she learned about the square compactness from Istvan
Juh{\' a}sz. Finally, we thank the anonymous referee for the careful and prompt referee report.
}

\subjclass[2010]{Primary 03E55, 54D20; Secondary 54F05}
\keywords{square compact cardinal, box topology, filter extension property.}

\maketitle

\begin{abstract} We show that the consistency strength of $\kappa$
being $2^\kappa$-square compact is at least weak compact and strictly
less than indescribable. This is the first known improvement to the
upper bound of strong compactness obtained in 1973 by Hajnal and
Juh{\' a}sz. \end{abstract}

\section{Introduction}
In their 1973 paper \cite{HJ}, Andr{\' a}s Hajnal and Istv{\' a}n
Juh{\' a}sz introduced a large cardinal property based on the
productivity of $\kappa$-compactness, the square compact cardinals.
Following and expanding on \cite{HJ} we define that notion:

\begin{defn} An infinite cardinal $\kappa$ is said to be {\em
$\lambda$-square compact} if for any $(<\kappa)$-compact space
$X$\footnote{We note that for simplicity we only work with Hausdorff
spaces.}, with $w(X)\leq\lambda$, the square $X^2 = X\times X$ is
also $(<\kappa)$-compact.

$\kappa$ is said to be \emph{square compact} if it is $\lambda$-square
compact for all $\lambda$ and {\em weak square compact} if it is
$\kappa$-square compact.
\end{defn}

It is not difficult to see that if $\kappa$ is $\lambda$-square
compact, and $X,Y$ are $(<\kappa)$-compact spaces of weight
$\leq\lambda$, then $X\times Y$ is also $(<\kappa)$-compact. Indeed,
first note that in these circumstances the space $X\oplus Y$ is
$(<\kappa)$-compact with weight $\leq\lambda$.  Consequently,
$(X\oplus Y)\times (X\oplus Y)$ is also $(<\kappa)$-compact whenever
$\kappa$ is $\lambda$-square compact. But $X\times Y$ is a closed
subspace of $(X\oplus Y)\times (X\oplus Y)$, and therefore is
$(<\kappa)$-compact.

In \cite{HJ} the authors prove that a square compact cardinal is
weakly compact. It is easy to see and was certainly known to the
authors of \cite{HJ} that a $\lambda$-strongly compact cardinal is
$\lambda$-square compact. The exact consistency strength of the notion
of $\lambda$- square compactness and square compactness, in particular
if the latter is equivalent to weak compactness has been an open
problem since \cite{HJ}. In this paper we obtain a much better upper
bound for the consistency strength of $\lambda$-square compact
cardinals $\kappa$ for $\lambda\le 2^\kappa$, showing that it is
consistent modulo large cardinals that the first weakly compact
cardinal $\kappa$ is $2^\kappa$-square compact (Theorem
\ref{main}(2)). In particular, for such $\lambda$ there cannot be any
ZFC implication from being $\lambda$-square compact to being
indescribable, measurable (or any other large cardinal property that cannot be
exhibited by the first weakly compact cardinal), yet alone to being
$\lambda$-strongly compact. We also show that inaccessible
$\kappa$-square compact cardinals are already weakly compact (Theorem
\ref{main}(1)).

Surprisingly enough, our method does not apply at all to fully square
compact cardinals $\kappa$ or to those that are $\lambda$-square
compact for $\lambda> 2^\kappa$: the best known upper bound for the
consistency strength of them after this work still remains a strongly
compact cardinal.

The most interesting case for our work is when
$\lambda^{<\kappa}=\lambda$. We go through introducing the notion of
$\lambda$-filter extension property for $\kappa$ and showing that it
implies $\kappa$ being $\lambda$-square compact (Theorem
\ref{fromextensiontosquare}) and is equivalent to $\kappa$ being
nearly $\lambda$-strongly compact (Theorem \ref{T:EmbCh}), which is a
large cardinal notion introduced by White in \cite{W}. This allows to
make a connection with Schanker's notion of nearly
$\lambda$-supercompactness, whose consistency strength was studied in
\cite{Codyetal} and whose results we use to obtain our upper bound.

A side gain of our work are numerous equivalences obtained for the
notion of $\lambda$-filter extension property, including topological
ones (Theorem \ref{T:LFEP2}) and the elementary embedding ones
(Theorem \ref{fromextensiontosquare}), which then lead to various
equivalences of weak compactness (Theorem \ref{T:ECwCC}). The
topological equivalences use the $(<\kappa)$-box topology and lead to
a better understanding of it. For example, $\kappa$ has the
$\lambda$-filter extension property if and only if $2^\lambda$ is
$(<\kappa)$-compact in the $(<\kappa)$-box topology (Theorem
\ref{T:LFEP2}).

\section{Notation, conventions and observations}

Throughout our work we shall keep the convention that $2=\{0,1\}$ and
that for any set $X$, any subset of $X$ is identified with its
characteristic function. Furthermore the set of all functions from $X$
to $2$ identified with the product $2^X$, so we have a triple
identification between $\P(X)$, ${}^X 2$ and $2^X$. 

Greek letters $\kappa$  and $\lambda$ will always stand for infinite cardinal numbers.

Recall that a filter $\F$ on a set $X$ is said to be {\em principal}
if it is of the form $\{A \subseteq X:\, Y\subseteq A\}$ for some
$\emptyset\neq Y\subseteq X$, in which case $Y=\bigcap \F$. Otherwise, it is {\em
non-principal} (and hence $\bigcap\F=\emptyset$). A filter is said to
be a \emph{proper} filter if $\F\neq\P(X)$. For a regular cardinal
$\kappa$, we say that a filter $\F$ is {\em $(<\kappa)$-complete} if
$\bigcap\A\in\F$ for every family $\A\subseteq \F$ of size less than
$\kappa$.  It is not difficult to see that if $\F$
is a non-principal $(<\kappa)$-complete filter on some set $X$, then
every set in $\F$ has size at least $\kappa$.

Let $X$ be an infinite set and $\F_0$ a non-principal $(<\kappa)$-complete
filter on $X$. In some of our proofs the objective will be to obtain a
$(<\kappa)$-complete filter $\F$ on $X$ extending $\F_0$ and having
some special property. We give the following definition with this
purpose in mind.

\begin{defn}\label{grill}
Let $\F$ be a filter on a set $X$. Consider the set $G(\F)$, the
\emph{grill associated with} $\F$, of all subsets $S$ of $X$ such that
$S$ meets every $F\in\F$. One observes
that
\[
  S\in G(\F) \setminus \F\quad \text{ if and only if }\quad X\setminus
  S \in G(\F)\setminus \F\,.
\]
Therefore, $\F = G(\F)$ if and only if $\F$ is an
ultrafilter and
\[
  G(\F) = \{S\subseteq X: X\setminus S\notin \F\}\,.
\]
\end{defn}

Note that $\F\subseteq G(\F)$ and that $G(\F)$ satisfies the following
conditions:
\begin{enumerate}
\item $\emptyset \notin G(\F)$.
\item If $S\in G(\F)$ and $S\subseteq T \subseteq X$, then $T\in
G(\F)$.
\item If $\F$ is $(<\kappa)$-complete, then for every collection
$\{S_\lambda:\lambda\in\Lambda\}$ of subsets of $X$, with $|\Lambda| <
\kappa$, if $\bigcup_{\lambda\in\Lambda}S_\lambda \in G(\F)$, at least
one $S_\lambda$ is in $G(\F)$.
\end{enumerate}

The following is easily verified.

\begin{prop}\label{L0} Let $\F_0$ be a proper filter on $X$ and $S\subseteq X$
such that neither $S$ nor $X\setminus S$ is in $\F_0$. Then
\[
\{F\cap L: F\in\F_0 \text{ and }S\subseteq L\subseteq X\}
\]
generates a proper filter $\F$ that contains
$\{S\}\cup\F_0$. Moreover, if $\F_0$ is $(<\kappa)$-complete, then so
is $\F$.
\end{prop}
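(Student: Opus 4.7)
The plan is to verify that the displayed family
\[
\B = \{F\cap L : F\in\F_0 \text{ and } S\subseteq L\subseteq X\}
\]
is a filter base, i.e.\ it is nonempty, does not contain $\emptyset$, and is closed under finite intersections. Once this is done, the filter $\F$ it generates is automatically proper, and the two required containments $\F_0\subseteq \F$ and $S\in\F$ are immediate from the definition of $\B$: taking $L=X$ exhibits each $F\in\F_0$ as an element of $\B$, and taking $F=X\in\F_0$ exhibits $S$ (together with every superset of $S$) as an element of $\B$.

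For closure under intersections, I would simply use the identity
\[
(F_1\cap L_1)\cap(F_2\cap L_2)=(F_1\cap F_2)\cap(L_1\cap L_2),
\]
noting that $F_1\cap F_2\in\F_0$ because $\F_0$ is a filter and that $L_1\cap L_2\supseteq S$ because each $L_i\supseteq S$. The same identity, iterated over an index set of size $<\kappa$, handles the ``moreover'' clause: if $\F_0$ is $(<\kappa)$-complete then $\bigcap_{\alpha<\beta} F_\alpha\in\F_0$ whenever $\beta<\kappa$, and clearly $\bigcap_{\alpha<\beta} L_\alpha\supseteq S$, so $\bigcap_{\alpha<\beta}(F_\alpha\cap L_\alpha)\in\B$. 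Thus closure of $\B$ under $(<\kappa)$-intersections transfers to $\F$ in the standard way (any $A\in\F$ contains some member of $\B$, so an intersection of fewer than $\kappa$ elements of $\F$ contains an intersection of the same many members of $\B$, which lies in $\B\subseteq\F$).

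The only place where the hypothesis on $S$ plays a nontrivial role, and the single step worth singling out, is the verification that $\emptyset\notin\B$. Here I would argue by contradiction: if $F\cap L=\emptyset$ for some $F\in\F_0$ and some $L\supseteq S$, then in particular $F\cap S=\emptyset$, whence $F\subseteq X\setminus S$; upward closure of $\F_0$ would then force $X\setminus S\in\F_0$, contrary to hypothesis. Note that the symmetric hypothesis $S\notin\F_0$ is not used for this step \emph{per se}, but is exactly what is needed to rule out the trivial case $S\in\F_0$ (in which the proposition would be vacuous since $\F_0$ already contains $S$). No deeper obstacle arises, so the proof is essentially just these three observations assembled in order.
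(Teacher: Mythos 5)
Your proof is correct, and it is precisely the standard verification the paper has in mind: the paper offers no written proof (it introduces the proposition with ``The following is easily verified''), and your three observations --- that the displayed family is a filter base closed under $(<\kappa)$-intersections, that $L=X$ and $F=X$ give the containments $\F_0\subseteq\F$ and $S\in\F$, and that $\emptyset$ in the base would force $X\setminus S\in\F_0$ by upward closure --- are exactly the intended argument. Your side remark correctly isolates $X\setminus S\notin\F_0$ as the hypothesis doing the real work, with $S\notin\F_0$ serving only to make the statement non-trivial.
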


\begin{defn} Let $X$ be a set. A subset $\A$ of $\P(X)$
\emph{measures} a subset $A$ of $X$ if $A\in \A$ or $X\setminus A\in \A$. 
$\A$ \emph{measures} a subset $\C$ of $\P (X)$ if $\A$ measures
every element $C$ of $\C$.

$\kappa$ is said to have the \emph{filter property} if for every
subset $\C$ of $\P(\kappa)$ of size $\leq\kappa$, there is a
$(<\kappa)$-complete non-principal filter $\F$ on $\kappa$ which
measures $\C$.
\end{defn}

So $\F$ is an ultrafilter on $\kappa$ if it measures $\P(\kappa)$. It is
well-known that an uncountable cardinal $\kappa$ has the filter
property if and only if $\kappa$ is weakly compact.

For cardinals $\kappa\le\lambda$ we write $\P_\kappa(\lambda)$ for the family
$\{a\subseteq \lambda:\,|a|<\kappa\}$.

\begin{defn}
A filter $\F$ on $\P_\kappa(\lambda)$ is said to be
\emph{fine} if for every $\alpha<\lambda$ we have $B_\alpha = \{a \in
\P_\kappa(\lambda):\alpha\in a\}\in \F$. 
\end{defn}

Note that if $\F$ is a fine
($<\kappa$)-complete filter on $\P_\kappa(\lambda)$, then $C_a = 
\{b \in \P_{\kappa}(\lambda):\,a\subseteq b\}\in\F$ for every
$a\in \P_{\kappa}(\lambda)$. Indeed, $C_a = \bigcap_{\alpha \in
  a}B_\alpha$ and $|a|<\kappa$.

We now mention some notions from topology.

It is well known that even for $T_0$ spaces $|X|\leq 2^{w(X)}$, which is
a fact that we shall use without further mention.

A topological notion connected to the $(<\kappa)$-completeness of filters is that
of $(<\kappa)$-compactness.

\begin{defn} A topological space $X$ is said to be {\em
$(<\kappa)$-compact}, if every open cover of $X$ has a subcover of
cardinality strictly less than $\kappa$.
\end{defn}

This notion is sometimes easier to express in terms of the
complementary property of closed sets. 

\begin{defn} Let $X$ be a set. A subset $\A$ of $\P(X)$ is said to
satisfy the $(<\kappa)$-intersection property if $\bigcap_{i<\gamma} A_i
\neq \emptyset$ whenever $\gamma<\kappa$ and
$\{A_i:i<\gamma\}\subseteq \A$.
\end{defn}

The following observation then connects the notions of compactness and completeness.

\begin{obs}\label{T:III.5} Suppose that $\kappa$ is a regular
cardinal.  A topological space $X$ is $(<\kappa)$-compact if and only
if every $(<\kappa)$-complete filter consisting of closed sets in $X$
has a nonempty intersection if and only if every family of closed sets
with the $(<\kappa)$-intersection property, has a nonempty
intersection.
\end{obs}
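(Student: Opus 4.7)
The plan is to establish the chain of equivalences by first showing $(A) \iff (C)$, where $(A)$ is $(<\kappa)$-compactness and $(C)$ is the closed-set intersection formulation, and then showing $(B) \iff (C)$, where $(B)$ is the filter version. The direction $(C) \Rightarrow (B)$ is essentially tautological, while $(B) \Rightarrow (C)$ requires generating a filter from a given family. I expect the only mild subtlety to lie in the correct reading of ``filter of closed sets'' and in using the regularity of $\kappa$.

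For $(A) \iff (C)$, I would invoke the standard duality via complementation. Given a family $\A$ of closed sets with empty total intersection, the family $\{X\setminus A : A\in\A\}$ is an open cover of $X$. If $X$ is $(<\kappa)$-compact, it admits a subcover of size $<\kappa$, and the corresponding subfamily of $\A$ is a witness of size $<\kappa$ to the failure of the $(<\kappa)$-intersection property. The reverse implication runs identically, starting from an open cover and passing to the closed complements.

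For $(C) \Rightarrow (B)$, if $\F$ is a proper $(<\kappa)$-complete filter consisting of closed sets, then $\F$ itself is a family of closed sets with the $(<\kappa)$-intersection property, since it is closed under intersections of fewer than $\kappa$ of its members and does not contain $\emptyset$. So $(C)$ applies directly to yield $\bigcap\F\neq\emptyset$. For $(B) \Rightarrow (C)$, given a family $\A$ of closed sets with the $(<\kappa)$-intersection property, I would set
\[
\F = \bigl\{C\subseteq X : C \text{ closed and } C \supseteq \textstyle\bigcap\A_0 \text{ for some } \A_0\subseteq\A \text{ with } |\A_0|<\kappa\bigr\}.
\]
Using regularity of $\kappa$ (so that an intersection of fewer than $\kappa$ sets, each itself an intersection of fewer than $\kappa$ members of $\A$, is again such), $\F$ is a $(<\kappa)$-complete filter of closed sets extending $\A$, and it is proper by the $(<\kappa)$-intersection property of $\A$. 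Applying $(B)$ gives $\bigcap\F\neq\emptyset$, and since $\A\subseteq\F$, we get $\bigcap\A\supseteq\bigcap\F\neq\emptyset$, as required.

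The only point that needs minor care is the interpretation of ``filter consisting of closed sets'': it must mean a filter on $X$ (proper, closed under finite intersections and supersets within the lattice of closed sets) all of whose elements happen to be closed, rather than a subset of $\P(X)$ closed under arbitrary supersets. Under this natural reading, the generation step above produces a genuine $(<\kappa)$-complete filter, and regularity of $\kappa$ is precisely what makes the closure under $<\kappa$-intersections pass from $\A$ to $\F$.
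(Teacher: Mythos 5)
Your proof is correct: the complementation duality for $(A)\iff(C)$, the observation that a proper $(<\kappa)$-complete filter of closed sets is itself a family with the $(<\kappa)$-intersection property, and the generation of a $(<\kappa)$-complete closed-set filter from such a family (using regularity of $\kappa$ exactly where you say, to keep unions of fewer than $\kappa$ index sets of size $<\kappa$ small) is precisely the standard argument the paper leaves implicit by stating this as an unproved Observation. Your closing remark on the correct reading of ``filter consisting of closed sets'' (a filter in the lattice of closed sets, as the paper's later use in Lemma \ref{fromtreetotopology} confirms) is also the right interpretation, so nothing is missing.
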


Filters are not only connected to compactness but also to
the productiveness of that notion. Let us recall some
product topology definitions.

\begin{defn} Suppose that $\langle X_\alpha:\,\alpha
<\alpha^\ast\rangle$ is a sequence of topological spaces and $\theta$
is an infinite cardinal. The $(<\theta)${\em -box topology} on the
product $\Pi_{\alpha<\alpha^\ast}X_\alpha$ has as a base, sets of
the form $\Pi_{\alpha<\alpha^\ast}O_\alpha$, where each $O_\alpha$ is
open in $X_\alpha$ and $|\{\alpha <\alpha^\ast:\, O_\alpha\neq
X_\alpha\}|< \theta$.
\end{defn}

So, in the case of $\theta=\aleph_0$, the $(<\theta)$-box topology is
the usual Tychonoff topology.  In the case of $\kappa=\aleph_0$,
Observation \ref{T:III.5} has another equivalent, which is that $X$ is
compact if and only if every ultrafilter of closed subsets of $X$ has
a nonempty intersection. The forward direction of this claim is
clear, while for the backward direction, suppose that we have a filter
$\F$ consisting of closed subsets of $\kappa$. Consider the collection
$\C$ of all families of closed subsets of $X$ which contain $\F$ and
which form a filter. Since $\C$ is nonempty, partially ordered by
$\subseteq$ and closed under unions, by Zorn's lemma $\C$ has a
maximal element $\U$, which is then an ultrafilter of closed
sets. Hence $\bigcap \U\neq \emptyset$ and so $\bigcap \F\neq
\emptyset$. However, if $\kappa >\aleph_0$, the notion of
$(<\kappa)$-compactness does not have an ultrafilter
characterisation. The above proof does not work since Zorn's lemma
does not apply to the family of all $(<\kappa)$-complete filters of
closed sets to produce a $(<\kappa)$-complete ultrafilter, but actually
that analogue of Zorn's lemma leads to large cardinal notions, as we
shall discuss below.

The ultrafilter equivalent of compactness is used in Tychonoff's proof
that the Tychonoff product of compact spaces is compact and, for the
reasons just explained, the analogue for the $(<\kappa)$-box
topological product of $(<\kappa)$-compact spaces does not work. In
fact, here is a well-known result for {\em strongly compact}
cardinals.

\begin{defn}\label{def:stronglycompact}
A cardinal $\kappa$ is said to be {\em
$\lambda$-strongly compact} if every ($<\kappa$)-complete filter $\F$
over any set $S$, generated by at most $\lambda$ many elements, can be
extended to a ($<\kappa$)-complete ultrafilter over $S$.

$\kappa$ is {\em strongly compact} if it is $\lambda$-strongly compact
for all $\lambda\ge\kappa$.
\end{defn}

\begin{thm}\label{T:SCEq} The following statements are equivalent:
\begin{enumerate}[{\rm (1)}]
\item The cardinal $\kappa$ is strongly compact;
\item Any $(<\kappa)$-box product of $(<\kappa)$-compact spaces is
$(<\kappa)$-compact;
\item Any $(<\kappa)$-box product of discrete two-point spaces is
$(<\kappa)$-compact.
\end{enumerate}
\end{thm}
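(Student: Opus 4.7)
I would prove the cycle $(1) \Rightarrow (2) \Rightarrow (3) \Rightarrow (1)$. The implication $(2) \Rightarrow (3)$ is immediate since the two-point discrete space is trivially $(<\kappa)$-compact.

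For $(1) \Rightarrow (2)$, let $X = \prod_\alpha X_\alpha$ carry the $(<\kappa)$-box topology. By Observation \ref{T:III.5} it suffices to show that every $(<\kappa)$-complete filter $\F$ on the underlying set of $X$ consisting of closed sets has nonempty intersection. Strong compactness extends $\F$ to a $(<\kappa)$-complete ultrafilter $\U$ on $X$; the push-forward $\U_\alpha = \{A \subseteq X_\alpha : \pi_\alpha^{-1}(A) \in \U\}$ is a $(<\kappa)$-complete ultrafilter on $X_\alpha$, and the $(<\kappa)$-compactness of $X_\alpha$ (again via Observation \ref{T:III.5}) produces a point $x_\alpha$ belonging to every closed member of $\U_\alpha$. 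If some $F \in \F$ avoided $(x_\alpha)_\alpha$, a basic open box $\prod U_\alpha$ around $(x_\alpha)_\alpha$ disjoint from $F$ would have fewer than $\kappa$ nontrivial coordinates, so $(<\kappa)$-completeness of $\U$ would force some $U_{\alpha_0} \notin \U_{\alpha_0}$, making $X_{\alpha_0} \setminus U_{\alpha_0}$ a closed member of $\U_{\alpha_0}$ missing $x_{\alpha_0}$, a contradiction.

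For $(3) \Rightarrow (1)$, I would realise potential $(<\kappa)$-complete ultrafilters on a set $I$ as points of $X = 2^{\P(I)}$, which is $(<\kappa)$-compact by (3). Given a $(<\kappa)$-complete filter $\F$ on $I$, consider the following clopen subsets of $X$: $E_F = \{\chi : \chi(F) = 1\}$ for $F \in \F$; $D_A = \{\chi : \chi(A) = 1 \text{ or } \chi(I \setminus A) = 1\}$ for each $A \subseteq I$; $R_\A = \{\chi : \chi(\bigcap \A) = 1 \text{ or } \chi(A') = 0 \text{ for some } A' \in \A\}$ for each $\A \subseteq \P(I)$ with $|\A| < \kappa$; $S_{A,B} = \{\chi : \chi(A) = 0 \text{ or } \chi(B) = 1\}$ for each pair $A \subseteq B$; and $Z = \{\chi : \chi(\emptyset) = 0\}$. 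Each depends on fewer than $\kappa$ coordinates of $X$, hence is clopen, and any point $\chi$ in the intersection of all of them yields via $\U = \chi^{-1}(1)$ a $(<\kappa)$-complete ultrafilter extending $\F$. By Observation \ref{T:III.5} it therefore suffices to verify the $(<\kappa)$-intersection property for this family.

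Given fewer than $\kappa$ of those clopens, by regularity of $\kappa$ they mention altogether fewer than $\kappa$ subsets of $I$ and in particular at most $<\kappa$ elements of $\F$; by $(<\kappa)$-completeness of $\F$ the intersection $G$ of those elements lies in $\F$, so it is nonempty. Picking $x \in G$, the principal ultrafilter $\chi_x$ defined by $\chi_x(A) = 1 \iff x \in A$ satisfies every constraint in the chosen subfamily: $\chi_x(F) = 1$ on each relevant $F$ because $x \in F$, while the $D, R, S, Z$ constraints are verified automatically because $\chi_x$ is a genuine $(<\kappa)$-complete ultrafilter. The main subtlety I anticipate is choosing the space and constraint family so that a principal ultrafilter always works as the $(<\kappa)$-IP witness; the naive alternative of inductively extending $\F$ to decide one subset at a time fails at limit stages below $\kappa$, since the union of a chain of $(<\kappa)$-complete proper filters need not be $(<\kappa)$-complete and its $(<\kappa)$-closure can fail to be proper.
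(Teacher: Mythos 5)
Your proof is correct in all essentials, and it takes a different route from the paper's. Bear in mind that the paper never writes this proof out: it only remarks that Theorem \ref{T:SCEq} ``can be deduced by similar arguments'' to those proving Theorem \ref{T:ECwCC}, i.e.\ by routing both nontrivial implications through the $\lambda$-filter extension property (Lemmas \ref{T:LFEP} and \ref{T:EqCwCC}), letting $\lambda$ range over cardinals with $\lambda^{<\kappa}=\lambda$. Your $(1)\Rightarrow(2)$ is instead the classical ultrafilter proof of Tychonoff's theorem: because strong compactness supplies genuine $(<\kappa)$-complete ultrafilters (not merely filters measuring a prescribed small family), you can push $\U$ forward along each projection and assemble the coordinatewise limit points; this avoids the weight bounds, the base bookkeeping, and the hypothesis $\lambda^{<\kappa}=\lambda$ that Lemma \ref{T:LFEP} needs. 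Your $(3)\Rightarrow(1)$ is in the same spirit as Lemma \ref{T:EqCwCC} --- points of a big box power of $2$ are candidate filters, and the desired properties are cut out by closed sets --- but with one structural simplification: you encode ``the witness extends $\F$'' by one clopen set $E_F$ per member of $\F$, instead of by the single closed set $E=\{\F'\in 2^{\Sigma'}:\F'\supseteq\F_0\}$ as the paper does. A subfamily of fewer than $\kappa$ of your constraints therefore binds the witness to only $<\kappa$ members of $\F$, so a principal ultrafilter concentrated at a point of their intersection verifies the $(<\kappa)$-intersection property; in the paper's setup every closed set $\Phi_S$ contains the whole condition $E$, so principal witnesses are unavailable when $\F_0$ is non-principal, and the grill machinery (Definition \ref{grill}, Proposition \ref{L0}) is what replaces them. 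Your closing diagnosis of why naive transfinite extension fails at limit stages is also exactly right.

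Two blemishes, both repairable in a line. In $(3)\Rightarrow(1)$ you appeal to ``regularity of $\kappa$'', which at that point you do not have: the theorem does not assume $\kappa$ regular, and regularity follows from $(1)$, not evidently from $(3)$. Fortunately you never need that count: among your chosen $<\kappa$ clopen sets, only those of the form $E_F$ contribute members of $\F$, and each contributes exactly one, so $G$ is an intersection of $<\kappa$ members of $\F$ no matter how many subsets of $I$ are mentioned by the sets $R_{\A}$; those, together with the $D_A$, $S_{A,B}$ and $Z$ constraints, are satisfied by every principal ultrafilter regardless. Relatedly, Observation \ref{T:III.5} is stated only for regular $\kappa$: in $(1)\Rightarrow(2)$ this is harmless, since a strongly compact $\kappa$ in the sense of Definition \ref{def:stronglycompact} is regular (indeed measurable); and in $(3)\Rightarrow(1)$ you only use the implication from $(<\kappa)$-compactness plus the $(<\kappa)$-intersection property to nonemptiness of the total intersection, which holds for every infinite $\kappa$ directly from the covering definition.
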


Although this theorem is attributed to the mathematical folklore, we
could not find a proof written in the literature. One of our main
results, Theorem \ref{T:ECwCC} below, is an elaboration of a similar
characterisation but for weakly compact cardinals. A proof of Theorem
\ref{T:SCEq} can be deduced by similar arguments as those that we used
for the proof of Theorem \ref{T:ECwCC}.

\section{$\lambda$-Filter Extension Property}\label{S:LFEP}
Let us start with the following definition.

\begin{defn}\label{D:2} Suppose that $\kappa$ is an infinite
cardinal and $\lambda\geq\kappa$. $\kappa$ is said to have the \emph{$\lambda$-filter
extension property} if for every family
$\A\subseteq \P(2^\lambda)$ with $|\A|\leq\lambda$, every
($<\kappa$)-complete filter $\F$ over $2^\lambda$ generated by at most
$\lambda$ many elements can be extended to a ($<\kappa$)-complete
filter over $2^\lambda$ measuring $\A$.
\end{defn}

This section is devoted to the following Theorem \ref{T:LFEP2}, which
will lead to an upper bound for the consistency strength of
$\lambda$-square compact cardinals, as per Theorem
\ref{fromextensiontosquare}.

\begin{thm}\label{T:LFEP2} Suppose that $\lambda^{<\kappa}=\lambda$
for some infinite cardinals $\kappa$ and $\lambda$. Then the following
are equivalent:
\begin{enumerate}[{\rm (1)}]
\item $\kappa$ has the $\lambda$-filter extension property;
\item Suppose that $\{ X_\gamma: \,\gamma \le \Gamma\}$ for some
$\Gamma\le \lambda$ is a family of $(<\kappa)$-compact spaces such
that $w(X_\gamma) \leq \lambda$ for all $\gamma\in\Gamma$.  Then the
$(<\kappa)$-box product space $\prod_{\gamma \in \Gamma}X_\gamma$ is
also $(<\kappa)$-compact;
\item $2^{\lambda}$ is $(<\kappa)$-compact in the $(<\kappa)$-box
topology.
\end{enumerate}
\end{thm}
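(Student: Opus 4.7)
The plan is to prove the cycle $(1) \Rightarrow (2) \Rightarrow (3) \Rightarrow (1)$. The implication $(2) \Rightarrow (3)$ is immediate by taking each $X_\gamma = \{0,1\}$ for $\gamma < \lambda$: these are finite, hence $(<\kappa)$-compact of weight $\le \lambda$, and their $(<\kappa)$-box product is $2^\lambda$. For $(3) \Rightarrow (1)$, given $\A = \{A_\alpha : \alpha < \mu\}$ with $\mu \le \lambda$, I define the coding map $G : 2^\lambda \to 2^\mu$ by $G(x)(\alpha) = 1 \iff x \in A_\alpha$ and consider $\{\overline{G(F)} : F \in \F\}$ in $2^\mu$ with the $(<\kappa)$-box topology. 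Since $G(F_1 \cap F_2) \subseteq G(F_1) \cap G(F_2)$ and $\F$ is $(<\kappa)$-complete, this family has the $(<\kappa)$-intersection property; as $2^\mu$ is a continuous image of $2^\lambda$ under coordinate projection, hypothesis (3) yields $g \in \bigcap_F \overline{G(F)}$. Setting $A^1 = A$ and $A^0 = 2^\lambda \setminus A$, the $(<\kappa)$-complete filter generated by $\F \cup \{A_\alpha^{g(\alpha)} : \alpha < \mu\}$ extends $\F$ and measures $\A$; properness follows because $g \in \overline{G(F)}$ produces, for every $F \in \F$ and $s \in [\mu]^{<\kappa}$, a witness $x \in F$ with $x \in A_\alpha \iff g(\alpha) = 1$ for $\alpha \in s$.

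The main work is in $(1) \Rightarrow (2)$. By Observation \ref{T:III.5}, it suffices to produce a point in every $(<\kappa)$-complete filter $\H$ of closed sets in $P = \prod_{\gamma \le \Gamma} X_\gamma$. Using $\lambda^{<\kappa} = \lambda$ and $w(X_\gamma) \le \lambda$, pick closed bases $\D_\gamma$ of each $X_\gamma$ of size $\le \lambda$; then $P$ admits a closed base $\D_P$ of size $\le \lambda$ built as complements of basic open sets, and standard manipulations reduce to $\H \subseteq \D_P$ with $|\H| \le \lambda$, each $D \in \H$ of the form $D = \bigcup_{i < \mu_D} \pi_{\gamma_i^D}^{-1}(C_i^D)$ with $\mu_D < \kappa$ and $C_i^D \in \D_{\gamma_i^D}$. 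Setting $I = \{(\gamma, C) : \gamma \le \Gamma, \, C \in \D_\gamma\}$ (of size $\le \lambda$), I encode $h : P \to 2^I$ by $h(p)(\gamma, C) = 1 \iff p_\gamma \in C$, set $\tilde D = \{c \in 2^I : \exists i, \, c(\gamma_i^D, C_i^D) = 1\}$, and form the $(<\kappa)$-complete filter $\F_0$ on $2^I$ generated by $\{\tilde D : D \in \H\}$ \emph{together with} the ``consistency'' sets $2^I \setminus B_{\gamma, E}$ for all $\gamma \le \Gamma$ and all $E \in [\D_\gamma]^{<\kappa}$ with $\bigcap E = \emptyset$, where $B_{\gamma, E} = \{c : c(\gamma, C) = 1 \text{ for all } C \in E\}$. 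Properness is witnessed by $h(p)$ for any $p$ in a suitable $(<\kappa)$-intersection from $\H$: $h(p) \in \tilde D$ follows from $p \in D$, and for each bad $E$, the emptiness of $\bigcap E$ forces some $p_\gamma \notin C \in E$, so $h(p) \notin B_{\gamma, E}$. After identifying $2^I$ with $2^\lambda$ (padding if necessary), the $\lambda$-FEP extends $\F_0$ to a $(<\kappa)$-complete filter $\F^*$ measuring $\A = \{A_{(\gamma, C)} = \{c : c(\gamma, C) = 1\} : (\gamma, C) \in I\}$, and I define $c^*(\gamma, C) = 1 \iff A_{(\gamma, C)} \in \F^*$.

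The crux is realizing $c^*$ by a point of $P$. For each $\gamma$, the set $R_\gamma = \{C \in \D_\gamma : c^*(\gamma, C) = 1\}$ has the $(<\kappa)$-intersection property in $X_\gamma$: if $E \subseteq R_\gamma$ with $|E| < \kappa$ had $\bigcap E = \emptyset$, then $(<\kappa)$-completeness of $\F^*$ would put $B_{\gamma, E} \in \F^*$, while $2^I \setminus B_{\gamma, E} \in \F_0 \subseteq \F^*$ by construction, contradicting properness. The $(<\kappa)$-compactness of $X_\gamma$ then supplies $p_\gamma \in \bigcap R_\gamma$. For each $D \in \H$, the $(<\kappa)$-completeness of $\F^*$ forces $c^* \in \tilde D$ (otherwise $\bigcap_i (2^I \setminus A_{(\gamma_i^D, C_i^D)}) = 2^I \setminus \tilde D$ would lie in $\F^*$ alongside $\tilde D$), so some $c^*(\gamma_i^D, C_i^D) = 1$ gives $p_{\gamma_i^D} \in C_i^D$ and hence $p = (p_\gamma) \in D$. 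The principal obstacle is exactly this consistency step: the bare FEP yields only a coordinate-wise choice $c^*$ that need not correspond to any actual point of $P$; the device of pre-loading $\F_0$ with at most $\lambda$ consistency generators (one per potential bad finite configuration) is what ensures the extracted $c^*$ is realizable while keeping the setup within the scope of the $\lambda$-FEP.
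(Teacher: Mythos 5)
Your proof is correct, and both nontrivial implications take routes genuinely different from the paper's. For $(3)\Rightarrow(1)$, the paper (Lemma \ref{T:EqCwCC}) builds the auxiliary space $2^{\Sigma'}$ of candidate filters on a $(<\kappa)$-closure $\Sigma'$ of the generators and the sets to be measured, shows the sets $\Phi_S$ of good filters are closed, and verifies the $(<\kappa)$-intersection property via grills and a partition of $X$ into $2^{|\Lambda|}$ pieces --- which is why it needs the sublemma that $2^\theta<\kappa$ for all $\theta<\kappa$. Your coding map $G:2^\lambda\to 2^\mu$ instead extracts the ``type'' $g$ of the desired extension directly from a point of $\bigcap_{F\in\F}\overline{G(F)}$, and properness of the filter generated by $\F\cup\{A_\alpha^{g(\alpha)}\}$ falls straight out of what it means for $g$ to lie in each closure in the box topology; this is shorter and bypasses the strong-limit sublemma entirely (you do use regularity of $\kappa$ when closing under $<\kappa$ intersections, but so does the paper throughout). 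For $(1)\Rightarrow(2)$, the paper (Lemma \ref{T:LFEP}) stays on the product $X\subseteq 2^\lambda$ itself: it extends the closed filter to a $(<\kappa)$-complete filter measuring the base $\B$ and pushes it forward along the projections $\pi_\gamma$, so realizability of the coordinatewise choices is automatic because the filter already concentrates on genuine points of $X$. You transfer instead to $2^I$ with $I$ the set of pairs $(\gamma,C)$, where most elements are not of the form $h(p)$, and you compensate by pre-loading the $\le\lambda$ consistency generators $2^I\setminus B_{\gamma,E}$; that is a correct, self-contained fix for the realizability problem you identify, at the cost of being longer than the paper's direct push-forward. The quantitative checks ($|\D_P|\le\lambda^{<\kappa}=\lambda$, at most $\lambda$ generators for $\F_0$, $|\A|\le\lambda$) all go through, so both of your arguments are sound alternatives to the ones in the text.
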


To prove Theorem \ref{T:LFEP2}, we shall prove two lemmas.

\begin{lemma}\label{T:LFEP} Suppose that $\lambda^{<\kappa}=\lambda$
and that $\kappa$ has the $\lambda$-filter extension property. Let
$\Gamma\le \lambda$ and suppose that $\{ X_\gamma: \,\gamma \le
\Gamma\}$ is a family of $(<\kappa)$-compact spaces such that
$w(X_\gamma) \leq \lambda$ for all $\gamma\in\Gamma$.  Then the
$(<\kappa)$-box product space $ \prod_{\gamma \in \Gamma}X_\gamma$ is
also $(<\kappa)$-compact.
\end{lemma}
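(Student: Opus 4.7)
The plan is to invoke Observation~\ref{T:III.5}: it suffices to show that every $(<\kappa)$-complete proper filter $\F$ of closed subsets of $X=\prod_{\gamma\le\Gamma}X_\gamma$ has nonempty intersection. Fix, for each $\gamma$, a base $\B_\gamma$ of $X_\gamma$ of size at most $\lambda$, and let $\A=\{\pi_\gamma^{-1}(U):\gamma\le\Gamma,\,U\in\B_\gamma\}$; the $(<\kappa)$-intersections of members of $\A$ form a basis $\B$ of the $(<\kappa)$-box product. Because $\lambda^{<\kappa}=\lambda$, both $\A$ and $\B$ have size at most $\lambda$, so $w(X)\le\lambda$ and hence $|X|\le 2^\lambda$.

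The first real step is a reduction that lets FEP become applicable: replace $\F$ by a subfilter with at most $\lambda$ generators. Set $\F_0=\{X\setminus B:B\in\B,\,B\cap F=\emptyset\text{ for some }F\in\F\}\subseteq\F$; then $|\F_0|\le\lambda$, and since every closed $F\in\F$ equals $\bigcap\{X\setminus B:B\in\B,\,B\cap F=\emptyset\}$ with each $X\setminus B$ lying in $\F_0$, one has $\bigcap\F_0=\bigcap\F$. Next, identify $X$ with a subset of $2^\A\cong 2^\lambda$ via the injection $\Phi(x)(a)=1\iff x\in a$ (injective by Hausdorffness of the $X_\gamma$ together with $\B_\gamma$ being a base). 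Consider the filter on $2^\lambda$ generated by $\{\Phi(X)\}\cup\{\Phi(F_0):F_0\in\F_0\}$; it is $(<\kappa)$-complete with at most $\lambda$ generators, so the $\lambda$-filter extension property extends it to a $(<\kappa)$-complete proper filter of $2^\lambda$ measuring the family $\{\tilde a:a\in\A\}$ where $\tilde a=\{f\in 2^\lambda:f(a)=1\}$. Pulling back through $\Phi^{-1}$, and using that $\Phi(X)$ lies in the extended filter, yields a $(<\kappa)$-complete proper filter $\F^*$ on $X$ containing $\F_0$ and measuring $\A$.

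It remains to extract the witness. For each $\gamma$, the pushforward $\F^*_\gamma=\{A\subseteq X_\gamma:\pi_\gamma^{-1}(A)\in\F^*\}$ is a $(<\kappa)$-complete proper filter on $X_\gamma$ measuring $\B_\gamma$; its closed members form a $(<\kappa)$-complete proper filter of closed sets in the $(<\kappa)$-compact $X_\gamma$, so by Observation~\ref{T:III.5} there is $x_\gamma$ in their intersection. I claim $x=(x_\gamma)$ lies in $\bigcap\F_0=\bigcap\F$: otherwise $x\in B$ for some basic open $B$ with $X\setminus B\in\F_0$; writing $B=\bigcap_{i<\delta}a_i$ with $\delta<\kappa$ and $a_i=\pi_{\gamma_i}^{-1}(U_i)\in\A$, we have $x\in a_i$ for each $i$, and the option $X\setminus a_i\in\F^*$ would place the closed set $X_{\gamma_i}\setminus U_i$ in $\F^*_{\gamma_i}$, forcing $x_{\gamma_i}\notin U_i$ against $x\in a_i$. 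Hence each $a_i\in\F^*$, so $B\in\F^*$ by $(<\kappa)$-completeness, but $X\setminus B\in\F_0\subseteq\F^*$, yielding $\emptyset=B\cap(X\setminus B)\in\F^*$, contradicting properness. The crux of the argument—and the step that requires precisely the hypothesis $\lambda^{<\kappa}=\lambda$—is the generator reduction to $\F_0$, which allows FEP to apply in spite of $\F$ itself potentially being generated by many more than $\lambda$ sets.
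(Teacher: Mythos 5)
Your proof is correct and follows essentially the same route as the paper's: reduce the given closed family to the $\le\lambda$-many complements of basic open sets, embed $X$ into $2^\lambda$, apply the $\lambda$-filter extension property to obtain a $(<\kappa)$-complete extension measuring the (sub)basic sets, push the extension forward to each factor to extract coordinates $x_\gamma$, and verify that $x=(x_\gamma)$ lies in the intersection. The differences are only in bookkeeping: you make explicit the reduction to the subfamily $\F_0$ of complements of basic opens and the transfer between filters on $X$ and on $2^\lambda$ (via the embedding $\Phi$ and the extra generator $\Phi(X)$), steps the paper treats as immediate, and you measure only the subbasic family $\A$ where the paper measures the full base $\B$.
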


\begin{proof} Let $X$ denote the $(<\kappa)$-box product space
$\prod_{\gamma \in \Gamma}X_\gamma$. Observe that $w(X) \leq
\lambda$. Indeed, suppose that for each $\gamma$ we have a base
$\B_\gamma$ for the topology on $X_\gamma$ with
$|\B_\gamma|\leq\lambda$. We denote by $\pi_\gamma:X\to X_\gamma$, the
projection of $X$ onto $X_\gamma$. Namely,
\[
\pi_\gamma(x) = x_\gamma \quad \text{ for }x = \langle x_\gamma: \gamma\le\Gamma\rangle.
\]
Then the base $\B$ (for the $(<\kappa)$-box topology) on $X$ is
generated by taking $<\kappa$ intersections from the collection $
\{\pi^{-1}_{\gamma}(U_{\gamma}):U_\gamma\in\B_\gamma\,,
\gamma\in\Gamma\}$ is of cardinality $\leq\lambda$, by the cardinal
arithmetic hypothesis on $\lambda$. It then follows that $X$ has size
$\le 2^\lambda$, so we can view $X$ as a subset of $2^\lambda$.

To show $(<\kappa)$-compactness of $X$, let $\F$ be a closed
collection in $X$ satisfying the $(<\kappa)$-intersection
property. One can also assume that for every $F\in\F$, $X\setminus
F\in\B$.  Since $|\B|\leq\lambda$, it is clear that $|\F| \leq
\lambda$. Note that we can use $\lambda$-filter extension property of
$\kappa$ on $|\F|$ since $X$ is a subset of $2^\lambda$. Therefore,
since $|\B| \leq \lambda$, we can extend $\F$ to a
$(<\kappa)$-complete filter $\F'$ that measures $\B$.  We claim that
for each $\gamma\in\Gamma$
\[
\F_\gamma = \{\pi_\gamma(A): A\in\F'\}
\]
is a $(<\kappa)$-complete filter in $X_\gamma$ that measures
$\B_\gamma$. First it is clear that
$\emptyset\notin\F_\gamma$. Secondly, if $B\supseteq \pi_\gamma(A)$
for some $A\in\F'$, then $\pi^{-1}_\gamma(B)\supseteq A$, which
implies $\pi^{-1}_\gamma(B) \in\F'$. Hence
\[
B = \pi_\gamma(\pi^{-1}_\gamma(B)) \in \F_\gamma\,.
\]
Thirdly, if $B = \pi_\gamma(A), B' = \pi_\gamma(A')$ for some $A, A'
\in \F'$, then
\[
B\cap B' \supseteq \pi_\gamma(A\cap A')\,.
\]
Since $A\cap A'\in \F'$, and hence $\pi_\gamma(A\cap A')\in
\F_\gamma$, this implies (as has just been proved)
\[
B\cap B' \in \F_\gamma\,.
\]
The same is true for any collection $\{B_\alpha:\alpha\in I\}$ of size
$<\kappa$ because $\F'$ is $(<\kappa)$-complete.
Therefore, $\F_\gamma$ is a $(<\kappa)$-complete filter of $X_\gamma$.

In fact $\F_\gamma$ measures $\B_\gamma$, for suppose
$U_\gamma\in\B_\gamma$. Then $\pi^{-1}_\gamma(U_\gamma)\in\B$. Thus,
either $\pi^{-1}_\gamma(U_\gamma)$ or $X\setminus \pi^{-1}_\gamma(U_\gamma)$ is
in $\F'$. In the first case, $U_\gamma\in \F_\gamma$, in the second
case $X_\gamma\setminus U_\gamma\in \F_\gamma$, which is what we
needed to show. Now $\overline{\F_\gamma} = \{C:C\in\F_\gamma, C\text{
is closed}\}$ is a closed collection in $X_\gamma$ satisfying the
$(<\kappa)$-intersection property,
hence its intersection is nonempty. Suppose $x_\gamma\in \bigcap
\overline{\F_\gamma}$, then we can assert that
\[
x = \{x_\gamma: \gamma\in\Gamma\} \in \bigcap\F\,.
\]
To show this, suppose $U\in\B$ is a given basic neighbourhood of $x$. Then $U$ is of
the form
\[
U = \prod_{i\in \Gamma_0} U_{\gamma_i} \times \prod\{X_\gamma: \gamma \neq
\gamma_i, i\in\Gamma_0\}\,,
\]
where each $U_{\gamma_i}\in\B_{\gamma_i}$ is a neighbourhood of
$x_{\gamma_i}$ in $X_{\gamma_i}$, and $\Gamma_0\subseteq\Gamma$ has
cardinality $<\kappa$. Note that $U$ can be expressed as
\begin{equation}\label{Eq:IV.1}
U = \bigcap_{i\in\Gamma_0} \pi^{-1}_{\gamma_i}(U_{\gamma_i})\,.
\end{equation}
Now $\pi^{-1}_{\gamma_i}(U_{\gamma_i})\in\B$. Suppose, however, that
it is not in $\F'$. Then $X\setminus
\pi^{-1}_{\gamma_i}(U_{\gamma_i})\in\F'$, since $\F'$ measures
$\B$. But then $X_{\gamma_i}\setminus U_{\gamma_i}\in\F_{\gamma_i}$, which is
closed. This contradicts $x_{\gamma_i}\in \bigcap
\overline{\F_{\gamma_i}}$. Consequently, we obtain
$\pi^{-1}_{\gamma_i}(U_{\gamma_i})\in\F'$. Hence, in view of
\eqref{Eq:IV.1}, we conclude that $U\in\F'$ because $\F'$ is
$(<\kappa)$-complete. Therefore $x\in\bigcap\F$
and we can conclude that $X$ is a $(<\kappa)$-compact space.
\end{proof}

The next lemma is motivated by the proof for the
equivalence of the Ultrafilter Principle and Tychonoff's Theorem for
compact Hausdorff spaces, see \cite{My, Sch}.

\begin{lemma}\label{T:EqCwCC} Suppose that $\kappa$ is an infinite
cardinal and $\lambda^{<\kappa} = \lambda$.  If $2^{\lambda}$ is
$(<\kappa)$-compact in the $(<\kappa)$-box topology, then $\kappa$ has
the $\lambda$-filter extension property.
\end{lemma}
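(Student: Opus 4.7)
The plan is to mimic the classical deduction of the Ultrafilter Principle from Tychonoff's theorem, using the hypothesized $(<\kappa)$-compactness of $2^\lambda$ (in the $(<\kappa)$-box topology) to parameterize the possible ways of extending $\F$ so as to measure $\A$. Let $\F$ be a $(<\kappa)$-complete filter on the set $2^\lambda$ generated by at most $\lambda$ many elements, and let $\A\subseteq\P(2^\lambda)$ with $|\A|\leq\lambda$. I would fix an injection $i:\A\hookrightarrow\lambda$ and, for each $\sigma\in 2^\lambda$, define $\tau_\sigma:\A\to 2$ by $\tau_\sigma(A)=\sigma(i(A))$. With the notation $A^1=A$ and $A^0=2^\lambda\setminus A$, the function $\tau_\sigma$ encodes a choice, for each $A\in\A$, between $A$ and its complement. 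The objective is to find $\sigma$ such that the $(<\kappa)$-complete filter generated by $\F\cup\{A^{\tau_\sigma(A)}:A\in\A\}$ is proper; this filter then automatically extends $\F$ and measures $\A$.

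To produce such a $\sigma$, for each $F\in\F$ and each $S\subseteq\A$ with $|S|<\kappa$ I would consider
\[
C(F,S)=\{\sigma\in 2^\lambda: F\cap \bigcap_{A\in S}A^{\tau_\sigma(A)}\ne\emptyset\}.
\]
Since membership in $C(F,S)$ depends only on $\sigma\uhr i(S)$ and $|i(S)|<\kappa$, each $C(F,S)$ is clopen in the $(<\kappa)$-box topology on $2^\lambda$. The key verification is the $(<\kappa)$-intersection property for the family $\{C(F,S)\}$: given pairs $(F_j,S_j)$ for $j<\gamma<\kappa$, set $F=\bigcap_j F_j\in\F$ (using $(<\kappa)$-completeness of $\F$) and $S=\bigcup_j S_j$ (of cardinality $<\kappa$ by regularity of $\kappa$). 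Since $F$ is nonempty, choosing $x\in F$ and defining $\tau_0:S\to 2$ by $\tau_0(A)=1$ iff $x\in A$ places $x$ in $F\cap\bigcap_{A\in S}A^{\tau_0(A)}$, so any $\sigma\in 2^\lambda$ with $\sigma(i(A))=\tau_0(A)$ for all $A\in S$ lies in every $C(F_j,S_j)$.

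By the $(<\kappa)$-compactness of $2^\lambda$ and Observation~\ref{T:III.5}, the intersection $\bigcap_{F,S}C(F,S)$ is nonempty; pick $\sigma$ in it. The $(<\kappa)$-complete filter generated by $\F\cup\{A^{\tau_\sigma(A)}:A\in\A\}$ is then proper, because any of its $(<\kappa)$-indexed generating intersections reduces to one of the form $F\cap\bigcap_{A\in S}A^{\tau_\sigma(A)}$ with $F\in\F$ and $|S|<\kappa$, which is nonempty by the choice of $\sigma$. The main subtle point is keeping distinct the two roles of $2^\lambda$ (as the underlying set of the filters and as the topological parameter space) and the systematic appeal to the regularity of $\kappa$ in ensuring both the clopen-ness of each $C(F,S)$ and the $(<\kappa)$-intersection property of the family.
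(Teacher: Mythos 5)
Your proof is correct, and it takes a genuinely different route from the paper's. The paper follows the Mycielski-style scheme of encoding \emph{entire candidate filters} as points of a compact space: it closes $\E\cup\A\cup\A^c$ under complements and intersections of fewer than $\kappa$ sets to obtain a family $\Sigma'$ of size at most $\lambda$ (this is where $\lambda^{<\kappa}=\lambda$ and the $\lambda$-generation of $\F$ are used), works in the hyperspace $2^{\Sigma'}$ with the $(<\kappa)$-box topology, and realizes the desired extension as a point in the intersection of the closed sets ``is a proper $(<\kappa)$-complete filter'', ``contains $\F_0$'', ``measures $S$''; verifying the $(<\kappa)$-intersection property of those sets requires the grill machinery of Definition~\ref{grill}, Proposition~\ref{L0}, and Sublemma~\ref{newref} ($2^\theta<\kappa$ for $\theta<\kappa$). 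You instead encode only the \emph{decisions about $\A$} as points $\sigma$ of $2^\lambda$ itself, and let $\F$ enter purely as an index set for the clopen constraint sets $C(F,S)$; the intersection property then comes from the one-line observation that any point of $\bigcap_j F_j$ induces a compatible selection. As a result you avoid the grills, the Sublemma, and all cardinal arithmetic beyond $|\A|\le\lambda$; moreover you never use the hypothesis that $\F$ is generated by at most $\lambda$ sets, so your argument proves the formally stronger statement that \emph{every} proper $(<\kappa)$-complete filter on $2^\lambda$ extends to one measuring $\A$, and it treats $\kappa=\aleph_0$ uniformly rather than as a separate case (as the paper does, via the Axiom of Choice). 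What the paper's heavier setup buys is essentially only that the measuring filter appears directly as a point of the compact space, instead of being generated in a final step.

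Two small points to tidy up. First, you invoke the regularity of $\kappa$, which is not among the stated hypotheses. This is harmless: the paper defines $(<\kappa)$-completeness only for regular $\kappa$, and in any case for uncountable $\kappa$ regularity follows from the hypothesis, since if $\cf(\kappa)<\kappa$ then $\{\chi_{[\alpha,\kappa)}:\alpha<\kappa\}$ is a closed discrete subspace of $2^\kappa$ of size $\kappa$ in the $(<\kappa)$-box topology (closedness at the constant zero function is witnessed by a box fixing a cofinal set of $\cf(\kappa)$ coordinates), contradicting $(<\kappa)$-compactness of $2^\kappa$, which is a closed subspace of $2^\lambda$. Note, however, that you invoke regularity at the one place it is not needed --- for the intersection property you only require \emph{some} $\sigma\in 2^\lambda$ agreeing with $\tau_0$ on $i(S)$, with no bound on $|S|$ --- whereas it is genuinely needed, and should be flagged, in your final step, where closure of the generated filter under intersections of $<\kappa$ of its members requires merging $<\kappa$ many sets $S_j$, each of size $<\kappa$, into a single $S$ of size $<\kappa$. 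Second, you should note that $\F$ is proper (equivalently $\emptyset\notin\F$), so that the chosen $F$ is indeed nonempty; this is implicit in Definition~\ref{D:2}, since otherwise the extension property is trivial.
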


\begin{proof} If $\kappa=\aleph_0$, the conclusion follows from the
Axiom of Choice.  For $\kappa$ uncountable, we shall need the following observation.

\begin{sublemma}\label{newref} Suppose that $2^\kappa$ is $(<\kappa)$-compact
in the $(<\kappa)$-box topology. Then we have that $2^\theta < \kappa$
whenever $\theta < \kappa$.
\end{sublemma}

\begin{proof} For each $f\in 2^\theta$ define $U_f=\{g\in 2^\kappa:\,
f\subseteq g\}$, so an open set in the $(<\kappa)$-box topology of
$2^\kappa$.  It is not difficult to see $\{ U_f:\, f\in 2^\theta\}$
forms a minimal open cover, so we have that its size satisfies
$2^\theta <\kappa$ by the $(<\kappa)$-compactness of $2^\kappa$.
\end{proof}

Now suppose that $\lambda\geq\kappa>\aleph_0$ satisfy
the hypothesis of the lemma and let $\F_0$ be a $(<\kappa)$-complete
filter on $2^\lambda$ which is generated by a family $\E$ of size
$\leq\lambda$ and let $\A$ be a family of subsets of $2^\lambda$ of size
$\leq\lambda$. We need to produce a $(<\kappa)$-complete filter $\F$
with $\F_0 \subseteq \F$ that measures $\A$. For simplicity in
notation, let us write $X$ for $2^\lambda$. The assumption that $2^{\lambda}$ is
$(<\kappa)$-compact in the $(<\kappa)$-box topology implies that 
$2^\kappa$ is $(<\kappa)$-compact
in the $(<\kappa)$-box topology, since $2^\kappa$ is homeomorphic to
a closed subspace of $2^{\lambda}$. Therefore, by Sublemma \ref{newref},  $2^\theta < \kappa$
whenever $\theta < \kappa$.

Let $\Sigma_0 = \E \cup \A \cup \A^c\cup\{\emptyset\}$, where $\C^c =
\{X\setminus S: S\in \C\}$ for any $\C\subseteq \P(X)$. Furthermore, let
$\Sigma$ be the closure of $\Sigma_0$ under intersections of $<\kappa$ elements.
Finally let $\Sigma'= \Sigma  \cup \Sigma^c$. 

Consider $\P(\Sigma')$,
identified with $2^{\Sigma'}$. Note that $|\Sigma'|\leq \lambda$
because $\lambda^{<\kappa} = \lambda$. Any $\F \in \P(\Sigma')$ is a
collection of subsets of $X$.

Now let $2^{\Sigma'}$ have the $(<\kappa)$-box
product topology; then the $S^{\text{th}}$ coordinate projection
$\pi_S:2^{\Sigma'} \to 2$ for any $S\in\Sigma'$, defined by
\[
\pi_S(\F) = \begin{cases}
               1 &\text{if $S\in\F$,} \\
               0 &\text{if $S\notin\F$,}
              \end{cases}
\]
is continuous (in fact it is even continuous in the Tychonoff product
topology), since for example $\pi_S^{-1}(\{1\})=\{\F:\,S\in \F\}$ is a basic clopen set. 
Let
\begin{align*}
D &= \{\F\in 2^{\Sigma'} : \F \text{ is a proper $(<\kappa)$-complete filter in }\Sigma'\}\,, \\
E &= \{\F\in 2^{\Sigma'} : \F \supseteq \F_0\} \mbox{ and}\\
  \Gamma_S &= \{\F\in 2^{\Sigma'} : S\in\F \text{ or }X\setminus S\in\F\},
             \mbox{ for each $S\in \Sigma'$.} 
\end{align*}
It is not difficult to see that
\begin{align*}
D &= \bigcap_{\substack{\mathllap{\Lambda\subseteq} \Sigma' \\
  \mathllap{|\Lambda|<} \kappa}}\{\F\in 2^{\Sigma'} : ([(\forall A\in\Lambda) \pi_A(\F) = 1 ]
  \text{ iff }\pi_{\bigcap\Lambda}(\F) = 1) \text{ and } \pi_{\emptyset}(\F) = 0\}, \\
E &= \bigcap_{A\in \Sigma'}\{\F\in 2^{\Sigma'} : [1 -
    \pi_A(\F)]\pi_A(\F_0) = 0\}\,, \quad \text{and}\\
\Gamma_S &= \{\F\in 2^{\Sigma'} : \pi_S(\F) +  \pi_{X\setminus S}(\F)
           \geq 1\}\,.
\end{align*}
We claim that by the continuity of the projections, the sets $D, E,
\Gamma_S$ are closed. Let us check this for the set $D$. 

We first note that for any $\Lambda \subseteq \Sigma'$, $|\Lambda| < \kappa$, the set
\begin{equation*}
O_\Lambda = \prod_{S\in \Sigma'} U_S\,, \text{ where } 
U_S = \begin{cases}
               \{0,1\} &\text{if $S\notin\Lambda$,} \\
               \{1\} &\text{if $S\in\Lambda$,}
              \end{cases}
\end{equation*}
is open in the $(<\kappa)$-box product topology. Consequently,
$C_\Lambda = 2^{\Sigma'}\setminus O_\Lambda$ is closed. Now, for any such
$\Lambda$,
\[
\{\F\in 2^{\Sigma'} : ([(\forall A\in\Lambda) \pi_A(\F) = 1 ]
  \text{ iff }\pi_{\bigcap\Lambda}(\F) = 1) \text{ and } \pi_{\emptyset}(\F) = 0\}  
\]
is equal to the set
\[
(C_\Lambda \cap \pi^{-1}_{\bigcap\Lambda}(\{0\}) \cap \pi^{-1}_\emptyset(\{0\})) \cup
((\bigcap_{A\in\Lambda}\pi^{-1}_A(\{1\}))\cap \pi^{-1}_{\bigcap\Lambda}(\{1\})
\cap \pi^{-1}_\emptyset(\{0\}))\,,
\]
which is closed. Thus, taking the intersection over all $\Lambda
\subseteq \Sigma'$, $|\Lambda| < \kappa$, we conclude that $D$ is
closed.

It then follows that the set $\Phi_S = D\cap E\cap \Gamma_S$ is also
closed, for any $S\in\Sigma'$.  Note that $\Phi_S$ is equal to the set
\begin{multline*}
\{\F : \F \text{ is a proper $(<\kappa)$-complete filter in $\Sigma'$ that includes
  $\F_0$} \\ \text{and contains at least one of $S$ or $X \setminus S$}\}\,.
\end{multline*}
By Proposition \ref{L0},  $\Phi_S\neq\emptyset$. 

Recall the notation of grills from Definition \ref{grill} and let us
now show that the collection of nonempty closed sets $\{\Phi_S: S\in
\left(G(\F_0)\setminus \F_0\right)\cap\Sigma'\}$ has the
$(<\kappa)$-intersection property.

Consider a collection $\{\Phi_{S_\alpha}:\alpha\in\Lambda\}$, where
$|\Lambda| < \kappa$. Take any $x\in X$ and for every
$\alpha\in\Lambda$ choose $S^*_\alpha\in\{S_\alpha,X\setminus
S_\alpha\}$ satisfying $x\in S^*_\alpha$. Then $x\in S^* =
\bigcap_{\alpha\in\Lambda}S^*_\alpha$. Thus, by considering all
possible intersections $\bigcap_{\alpha\in\Lambda}S^*_\alpha$ (there
are $2^{|\Lambda|} < \kappa$ many, see the discussion at the beginning of the proof), where
$S^*_\alpha\in\{S_\alpha,X\setminus S_\alpha\}$, we form a partition
of $X$ of cardinality $<\kappa$. Since $X\in G(\F_0)\cap \Sigma'$ it
follows that one of the intersections of the form
$\bigcap_{\alpha\in\Lambda}S^*_\alpha$ must be in $G(\F_0)\cap\Sigma'$
by property (3) of the grills. Let $S' =
\bigcap_{\alpha\in\Lambda}S^*_\alpha\in G(\F_0)\cap\Sigma'$. Note that
$S'\notin \F_0$, otherwise $S^*_\alpha\in\F_0$ and these sets are
taken from $G(\F_0)\setminus\F_0$. Consequently, $S'\in
G(\F_0)\setminus \F_0$ and therefore $X\setminus S'\in
G(\F_0)\setminus \F_0$. It then follows, again by applying Proposition
\ref{L0}, that there exists a $(<\kappa)$-complete filter $\G$ in
$\Sigma'$ that extends $\F_0$ and contains $S'$. Then $\G$ is in the
intersection $\bigcap_{\alpha\in\Lambda}\Phi_{S_\alpha}$, which is
therefore nonempty, as required.

By our assumption, $2^{\Sigma'}$ is $(<\kappa)$-compact; hence there
exists some $\U$ in the intersection $\bigcap_{S\in (G(\F_0)\setminus
  \F_0)\cap\Sigma'}\Phi_S$. Then $\U$ is a $(<\kappa)$-complete filter
on $\Sigma'$ that extends $\F_0$ and measures $\A$. One can then
extend $\U$ to a $(<\kappa)$-complete filter on $\P(X)$ by taking
supersets of sets from $\U$.
\end{proof}

\begin{proof}[Proof of Theorem \ref{T:LFEP2}] The implication
$(1)\implies (2)$ is given by Lemma \ref{T:LFEP}.  The implication
$(2)\implies (3)$ is obvious. The implication $(3)\implies (1)$ is
given by Lemma \ref{T:EqCwCC}.
\end{proof}

Using the same argument as in the proof of Lemma \ref{T:LFEP} we obtain:

\begin{thm}\label{fromextensiontosquare} For any cardinals
$\kappa\le\lambda$, if $\kappa$ has the $\lambda$-filter extension
property, then $\kappa$ is $\lambda$-square compact.
\end{thm}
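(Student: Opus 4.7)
The plan is to repeat the argument of Lemma~\ref{T:LFEP} with $\Gamma=2$ and both factors equal to the given space $X$. The hypothesis $\lambda^{<\kappa}=\lambda$ was needed there only to keep the size of the base of the $(<\kappa)$-box product $\leq\lambda$; for the ordinary product of two factors, the base
\[
\B = \{U\times V,\ U\times X,\ X\times V : U,V\in\B_0\},
\]
where $\B_0$ is a base for $X$ of size $\leq\lambda$, already has cardinality $\leq\lambda$ with no cardinal arithmetic input, so the same strategy should run without that assumption.

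First I would fix a $(<\kappa)$-compact space $X$ with $w(X)\leq\lambda$, identify $X\times X$ (which has weight, and hence cardinality, $\leq 2^\lambda$) with a subset of $2^\lambda$, and begin with a family $\F$ of closed subsets of $X\times X$ satisfying the $(<\kappa)$-intersection property. After the standard reduction used in Lemma~\ref{T:LFEP} to the case where the complements of elements of $\F$ lie in $\B$, we have $|\F|\leq\lambda$, so the $\lambda$-filter extension property produces a $(<\kappa)$-complete filter $\F'$ on $2^\lambda$ that extends $\F$ and measures $\B$.

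Then I would project: for $i=1,2$, the family $\F_i = \{\pi_i(A) : A\in\F'\}$ is a $(<\kappa)$-complete filter on $X$ (verified exactly as in Lemma~\ref{T:LFEP}), and it measures $\B_0$ because both $U\times X$ and $X\times V$ lie in $\B$. By $(<\kappa)$-compactness of $X$ the closed members of each $\F_i$ have non-empty intersection; pick $x_i$ in it. To conclude, given a basic neighbourhood $U\times V\in\B$ of $(x_1,x_2)$, the alternative $(X\setminus U)\times X\in\F'$ would place $X\setminus U\in\F_1$, a closed set of $X$ missing $x_1$, contradicting the choice of $x_1$; hence $U\times X\in\F'$, and likewise $X\times V\in\F'$. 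By $(<\kappa)$-completeness $U\times V\in\F'$, so this basic neighbourhood meets every $F\in\F$, and closedness of $F$ gives $(x_1,x_2)\in F$.

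There is no substantial obstacle: this proof is essentially a transcription of Lemma~\ref{T:LFEP}. The only real content is the observation that for $\Gamma=2$ the cardinal arithmetic hypothesis becomes redundant, which is what allows the $\lambda$-filter extension property to imply $\lambda$-square compactness for all $\kappa\leq\lambda$.
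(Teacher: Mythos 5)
Your proposal is correct and is exactly the paper's intended argument: the paper proves Theorem~\ref{fromextensiontosquare} by the single remark ``using the same argument as in the proof of Lemma~\ref{T:LFEP}'', and your transcription with $\Gamma=2$, including the projection of the extended filter and the final neighbourhood argument, is that argument. Your explicit observation that the hypothesis $\lambda^{<\kappa}=\lambda$ is only needed to control the base of an infinite box product, and is vacuous for the square since $|\B|\leq\lambda$ outright, is precisely the (implicit) reason the theorem can be stated for all $\kappa\leq\lambda$.
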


Of course, a direct consequence of Definitions \ref{def:stronglycompact} and 
\ref{D:2} is

\begin{prop} If $\kappa$ is $\lambda$-strongly compact then it has the
$\lambda$-filter extension property.
\end{prop}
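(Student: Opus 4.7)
The proposition is essentially immediate from the two definitions, and the plan is just to unpack them. Suppose $\kappa$ is $\lambda$-strongly compact, fix a family $\A \subseteq \P(2^\lambda)$ with $|\A| \le \lambda$, and fix a $(<\kappa)$-complete filter $\F$ on $2^\lambda$ that is generated by at most $\lambda$ elements. I need to produce a $(<\kappa)$-complete filter on $2^\lambda$ that extends $\F$ and measures $\A$.

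The obvious move is to apply Definition \ref{def:stronglycompact} directly, taking $S = 2^\lambda$ and the given $\F$ as input. Because $\F$ is $(<\kappa)$-complete and generated by at most $\lambda$ many elements, $\lambda$-strong compactness yields a $(<\kappa)$-complete ultrafilter $\U$ on $2^\lambda$ with $\F \subseteq \U$. The only thing left is to observe that an ultrafilter measures every subset: for each $A \in \A$ we have $A \subseteq 2^\lambda$, so either $A \in \U$ or $(2^\lambda \setminus A) \in \U$. Hence $\U$ measures $\A$ (in fact it measures all of $\P(2^\lambda)$), and since an ultrafilter is in particular a filter, $\U$ witnesses the $\lambda$-filter extension property for the pair $(\F,\A)$.

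There is no real obstacle here; the point is that the $\lambda$-filter extension property asks for strictly less than $\lambda$-strong compactness in two distinct ways: the extension only needs to be a filter rather than an ultrafilter, and it only needs to measure a prescribed family of size $\le \lambda$ rather than all of $\P(2^\lambda)$. The underlying set is also restricted to $2^\lambda$, whereas $\lambda$-strong compactness works over arbitrary sets. Thus the implication from $\lambda$-strong compactness to the $\lambda$-filter extension property is a one-line dilution of the definition, and the proposition should be stated and proved in a single short paragraph.
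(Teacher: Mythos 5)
Your proof is correct and is essentially identical to the paper's treatment: the paper states this proposition as ``a direct consequence of Definitions \ref{def:stronglycompact} and \ref{D:2}'' with no further argument, and your write-up is exactly that intended unpacking. Applying $\lambda$-strong compactness with $S = 2^\lambda$ to obtain a $(<\kappa)$-complete ultrafilter $\U \supseteq \F$, and noting that an ultrafilter measures every subset of $2^\lambda$ and in particular the given family $\A$, is precisely the one-line dilution the authors had in mind.
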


We do not know if the converse of Theorem \ref{fromextensiontosquare}
holds, leading to an open question:

\begin{prob} If a cardinal $\kappa$ is $\lambda$-square compact, does
it necessarily have the $\lambda$-filter extension
property? \end{prob}

\section{Embedding Characterisations of the $\lambda$-filter extension
property}\label{S:EmbChLambda}

Let us recall the following characterisation of $\lambda$-strongly
compact cardinals, 
(\cite[Th. 22.17, Pg. 307]{K}):
\begin{itemize}
\item A cardinal $\kappa$ is $\lambda$-strongly compact if and only if
there exists an elementary embedding $j: V \to M$ with critical point
$\kappa$, such that for all $X\subseteq M$ with $|X|\leq\lambda$,
there exists $Y\in M$ with $X\subseteq Y$ and $M \models |Y| <
j(\kappa)$.
\end{itemize}

We shall prove that the $\lambda$-filter extension property is
equivalent to a ``mini'' version of the above characterisation of
$\lambda$-strongly compact cardinals. That mini version has in fact
appeared in the recent literature, it was introduced by P. A. White in
\cite{W}, under the name {\em nearly $\lambda$-strongly compact
cardinals}.
 
\begin{defn}\label{D:LNSC} The cardinal $\kappa$ is said to be \emph{nearly
$\lambda$-strongly compact} if for every family $\A$ of $\lambda$-many
subsets of $\P_\kappa(\lambda)$ there is a non-principal
($<\kappa$)-complete fine filter $\F$ over $\P_\kappa(\lambda)$
measuring $\A$.
\end{defn}

\begin{thm}\label{T:EmbCh} Suppose that $\kappa>\aleph_0$ and
$\lambda^{<\kappa}=\lambda$. Then:
\begin{itemize}
\item $\kappa$ has the $\lambda$-filter extension property, if and
  only if
\item $\kappa$ is nearly $\lambda$-strongly compact, if and only if
\end{itemize}
any of the following three statements hold:
\begin{enumerate}[{\rm (A)}]
\item For every large enough regular cardinal $\chi$, 
 every model $M\prec H(\chi)$ with $|M|=\lambda$, $\lambda\subseteq M$ and
${}^{\kappa>}M\subseteq M$, there exists an elementary embedding $j: M
\to N$ with critical point $\kappa$, such that for all $X\subseteq N$
with $|X|\leq\lambda$, there exists $Y\in N$ with $X\subseteq Y$ and
$N \models |Y| < j(\kappa)$.
\item For every large enough regular cardinal $\chi$, for every
$T\in H(\chi)$, there is a model $M\prec H(\chi)$ satisfying $T\in M$,
$|M|=\lambda$, $\lambda\subseteq M$ and ${}^{\kappa>}M\subseteq M$,
with an elementary embedding $j: M \to N$ having critical point
$\kappa$, such that for all $X\subseteq N$ with $|X|\leq\lambda$,
there exists $Y\in N$ with $X\subseteq Y$ and $N \models |Y| <
j(\kappa)$.
\item For every $A\subseteq \lambda$, there is a transitive $M\models
\text{ZFC}^-$ with $\kappa,\lambda \in M$, $A\subseteq M$, $|M|=\lambda$,
${}^{\kappa>}M\subseteq M$, and an $N$ with an elementary
embedding $j: M \to N$ having critical point $\kappa$, such that
$j(\kappa)>\lambda$ and there is an $s\in N$ such that $j''(\lambda)
\subseteq s$ and $|s|^N<j(\kappa)$.
\end{enumerate}
\end{thm}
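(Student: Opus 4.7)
The plan is to establish all five equivalences via the cycle FEP $\Rightarrow$ NSC $\Rightarrow$ (A) $\Rightarrow$ (B) $\Rightarrow$ (C) $\Rightarrow$ NSC, together with the side-arrow (A) $\Rightarrow$ FEP closing the ring. The hypothesis $\lambda^{<\kappa}=\lambda$ will be used throughout: it yields $|\P_\kappa(\lambda)|=\lambda$, allows Löwenheim-Skolem constructions of $<\kappa$-closed elementary submodels of size $\lambda$, and keeps Boolean closures of $\lambda$-sized families at cardinality $\leq\lambda$.

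For FEP $\Rightarrow$ NSC, I would fix an injection $\iota:\P_\kappa(\lambda)\hookrightarrow 2^\lambda$, form the $(<\kappa)$-complete filter $\F_0$ on $2^\lambda$ generated by the $\lambda$ sets $\iota(B_\alpha)\cup(2^\lambda\setminus\iota(\P_\kappa(\lambda)))$, apply FEP to extend $\F_0$ to a filter measuring $\{\iota(A_\alpha):\alpha<\lambda\}$, and restrict the result to $\iota(\P_\kappa(\lambda))$. Transferring back by $\iota^{-1}$ yields a fine $(<\kappa)$-complete filter on $\P_\kappa(\lambda)$ measuring the given $\A$; fineness together with $\lambda\geq\kappa$ automatically forces non-principality.

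For NSC $\Rightarrow$ (A), given $M\prec H(\chi)$ with the required closure, apply NSC to $\A:=M\cap\P(\P_\kappa(\lambda))$ (of size $\leq\lambda$) to obtain a fine $(<\kappa)$-complete filter $\U$ measuring $\A$, and form the ultrapower $j:M\to N:=M^{\P_\kappa(\lambda)}/\U$; \L{}o\'s gives elementarity, $(<\kappa)$-completeness gives critical point $\kappa$, and fineness gives $j''\lambda\subseteq[\mathrm{id}]_\U$ with $N\models|[\mathrm{id}]_\U|<j(\kappa)$, upgraded to the covering of arbitrary $X\subseteq N$ of size $\leq\lambda$ via the closure ${}^{\kappa>}M\subseteq M$ (enumerate the internal functions representing $X$ inside $M$ and apply the same trick). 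The link (A) $\Rightarrow$ (B) is direct Löwenheim-Skolem, and (B) $\Rightarrow$ (C) is Mostowski collapse, which fixes $A,\kappa,\lambda$ because $\lambda\subseteq M$, with $j(\kappa)>\lambda$ forced by $|j''\lambda|\leq|s|^N<j(\kappa)$.

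The closing implications (C) $\Rightarrow$ NSC and (A) $\Rightarrow$ FEP both extract a generic point from the covering set. For (C) $\Rightarrow$ NSC, encode $\A$ as $A\subseteq\lambda$ via $\P_\kappa(\lambda)\cong\lambda$, apply (C), and set $\F:=\{X\in M\cap\P(\P_\kappa(\lambda)):s\cap j(\lambda)\in j(X)\}$; this filter is fine because $j(\alpha)\in s$ for $\alpha<\lambda$ forces $s\cap j(\lambda)\in j(B_\alpha)$, is $(<\kappa)$-complete via ${}^{\kappa>}M\subseteq M$, and measures $\A$ by elementarity. For (A) $\Rightarrow$ FEP, take $M$ containing $\F_0$ and $\A$; since $j(\kappa)>\lambda$ and $j(\F_0)$ is a proper $(<j(\kappa))$-complete filter in $N$, the $N$-intersection of the $j$-images of the $\lambda$ generators of $\F_0$ (indexed internally via $s\cap j(\lambda)$) belongs to $j(\F_0)$ and so is nonempty, and any point $p$ in it gives $\F:=\{X\in M\cap\P(2^\lambda):p\in j(X)\}$, a filter extending $\F_0$ and measuring $\A$. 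The main obstacle is ensuring $(<\kappa)$-completeness survives the upward closure from the $M$-internal Boolean subalgebra to the full power set; this is exactly delivered by ${}^{\kappa>}M\subseteq M$, which forces any $<\kappa$-sequence of filter members to already lie in $M$, hence its intersection does too.
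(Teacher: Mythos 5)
Your overall architecture (the cycle FEP $\Rightarrow$ NSC $\Rightarrow$ (A) $\Rightarrow$ (B) $\Rightarrow$ (C) $\Rightarrow$ NSC plus (A) $\Rightarrow$ FEP) is logically complete, but the very first arrow has a genuine gap as written. You generate $\F_0$ on $2^\lambda$ by the padded sets $D_\alpha=\iota(B_\alpha)\cup K$, where $K=2^\lambda\setminus\iota(\P_\kappa(\lambda))$. Since every generator contains $K$, the principal filter $\{X\subseteq 2^\lambda: K\subseteq X\}$ is a proper $(<\kappa)$-complete filter extending $\F_0$, and it measures every $\iota(A_\alpha)$ --- negatively, since $\iota(A_\alpha)\cap K=\emptyset$. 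The filter extension property only asserts that \emph{some} extension measuring the family exists, so it is entitled to hand you exactly this useless filter; its trace on $\iota(\P_\kappa(\lambda))$ contains $\emptyset$, so the transferred object on $\P_\kappa(\lambda)$ is the improper filter. (Properness is the issue here; non-principality, which is the only thing you address, is not the problem.) The fix is the opposite of padding: you must put $\iota(\P_\kappa(\lambda))$ itself \emph{into} the base filter. For instance, generate $\F_0$ by the sets $\iota(B_\alpha)$ alone --- they have the $(<\kappa)$-intersection property because $\bigcap_{\alpha\in a}B_\alpha=C_a\neq\emptyset$, and each is contained in $\iota(\P_\kappa(\lambda))$, so the range is in $\F_0$ and no proper extension can contain $K$. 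This is precisely how the paper handles it: it identifies $\P_\kappa(\lambda)$ with $\lambda$, views $\lambda$ as a subset of $2^\lambda$, and uses the pullback filter $\F_0^+=\{B\subseteq 2^\lambda: B\cap\lambda\in\F_0\}$, which contains $\lambda$ as an element, guaranteeing a proper trace.

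Beyond this, your route genuinely differs from the paper's in one respect: the paper proves (A) $\Rightarrow$ FEP $\Rightarrow$ NSC $\Rightarrow$ (A) and (A) $\Rightarrow$ (B) $\Rightarrow$ FEP, and then \emph{cites} White for (C) $\Leftrightarrow$ NSC, whereas you propose direct proofs of (B) $\Rightarrow$ (C) and (C) $\Rightarrow$ NSC, which would make the theorem self-contained --- a real gain if the details are supplied. Your remaining arrows (extracting a filter from an embedding via a seed in the intersection of $<j(\kappa)$ many members of $j(\F_0)$, and the ultrapower for NSC $\Rightarrow$ (A)) match the paper's techniques, but the (C) leg needs three repairs. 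First, in NSC $\Rightarrow$ (A) the quotient must be formed from functions lying in $M$ only (the paper's $(M^S/\F)_M$): the NSC filter is merely an $M$-ultrafilter, so {\L}o{\'s}'s theorem fails for arbitrary $f:\P_\kappa(\lambda)\to M$, as sets like $\{a:f(a)=g(a)\}$ need not be measured; your notation suggests the full ultrapower even though ``internal functions'' suggests you know this. Second, in (B) $\Rightarrow$ (C) the conclusion $j(\kappa)>\lambda$ compares an element of $N$ with an actual ordinal, which requires $N$ to be well-founded; (B) as stated supplies only an abstract $N$, so you should derive (C) from the concrete ultrapower built in NSC $\Rightarrow$ (A), whose well-foundedness follows from countable completeness of the filter (as the paper verifies), and then collapse both $M$ and $N$. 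Third, in (C) $\Rightarrow$ NSC your filter $\{X\in M\cap\P(\P_\kappa(\lambda)): s\cap j(\lambda)\in j(X)\}$ only measures members of the given family that are \emph{elements} of $M$; for that you need the code $A$ of the family, together with a bijection between $\lambda$ and $\P_\kappa(\lambda)$, to be an element of $M$ so that $M$ can decode it --- i.e.\ you must read (C) with $A\in M$ (as in White's formulation), since ``$A\subseteq M$'' is automatic and too weak.
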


We prove the theorem through a series of lemmas, assuming throughout
that $\kappa$ is uncountable and $\lambda^{<\kappa}=\lambda$.

\begin{lemma}\label{lemma1}
(A) $\implies$ ($\kappa$ has the
$\lambda$-filter extension property) $\implies$ ($\kappa$ is nearly
$\lambda$-strongly compact) $\implies$ (A).
\end{lemma}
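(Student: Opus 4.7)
The plan is to close the cycle $(A)\Rightarrow\lambda$-filter extension property $\Rightarrow$ near $\lambda$-strong compactness $\Rightarrow(A)$. For $(A)\Rightarrow$ the $\lambda$-filter extension property, given $\F_0$ on $2^\lambda$ generated by $\{F_\alpha:\alpha<\lambda\}$ and $\A$ with $|\A|\leq\lambda$, I would pull the data into an $M\prec H(\chi)$ of size $\lambda$ satisfying the hypotheses of $(A)$ and apply $(A)$ to obtain $j\colon M\to N$ with critical point $\kappa$ and the covering property. Covering $\{j(F_\alpha):\alpha<\lambda\}\subseteq N$ by $Y\in N$ with $N\models|Y|<j(\kappa)$, and intersecting $Y$ with $j(\F_0)$ (a $(<j(\kappa))$-complete filter in $N$), yields $s\in\bigcap_{\alpha<\lambda}j(F_\alpha)$. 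The filter
\[
\F=\{B\subseteq 2^\lambda:(\exists B'\in M)\ B'\subseteq B\text{ and }s\in j(B')\}
\]
is then $(<\kappa)$-complete (using $^{\kappa>}M\subseteq M$ to intersect the $B'$ witnesses inside $M$), extends $\F_0$ (each $F\in\F_0$ contains an intersection of $<\kappa$ generators, which belongs to $M$), and measures $\A$ (each $A\in\A$ is in $M$, and $s$ belongs to exactly one of $j(A), j(2^\lambda\setminus A)$ by elementarity).

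For $\lambda$-filter extension property $\Rightarrow$ near $\lambda$-strong compactness, I would use the natural inclusion $\P_\kappa(\lambda)\subseteq 2^\lambda$ via characteristic functions. Start from the minimal fine $(<\kappa)$-complete filter $\F_0$ on $\P_\kappa(\lambda)$ generated by the $\lambda$ sets $B_\alpha$, lift it to the $\lambda$-generated $(<\kappa)$-complete filter $\tilde\F_0=\{Z\subseteq 2^\lambda:Z\cap\P_\kappa(\lambda)\in\F_0\}$ on $2^\lambda$, apply the filter extension property to $(\tilde\F_0,\A)$ to obtain an extending filter $\F_X$ measuring $\A$, and take the trace $\F=\F_X\cap\P(\P_\kappa(\lambda))$. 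The trace is fine, $(<\kappa)$-complete, non-principal (since already $\bigcap_\alpha B_\alpha=\emptyset$), and measures $\A$ (if $2^\lambda\setminus A\in\F_X$ then $\P_\kappa(\lambda)\setminus A=\P_\kappa(\lambda)\cap(2^\lambda\setminus A)\in\F$).

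Near $\lambda$-strong compactness $\Rightarrow(A)$ is the main step. Given $M\prec H(\chi)$ as in $(A)$, let $\mathcal{F}(M)$ be the set of at most $\lambda$ functions $\P_\kappa(\lambda)\to M$ that belong to $M$, and let $\A$ be the family of at most $\lambda$ sets $\{a:M\models\varphi(f_1(a),\ldots,f_n(a))\}$ with $\varphi$ a formula and the $f_i$ in $\mathcal{F}(M)$. The hypothesis supplies a fine non-principal $(<\kappa)$-complete filter $\F$ measuring $\A$; I would then form the ultrapower $N=\mathcal{F}(M)/\F$ with $j(x)=[c_x]$. \L o\'s's theorem holds because $\F$ measures atomic sets and $M$ inherits Skolem functions from $H(\chi)$; $(<\kappa)$-completeness of $\F$ makes $N$ well-founded, and one Mostowski-collapses. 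The critical point is exactly $\kappa$: the partition argument on $\{\{a:f(a)=\beta\}:\beta<\alpha\}$ (measured by $\F$ as elements of $\A$) combined with $(<\kappa)$-completeness gives $j\upharpoonright\kappa=\mathrm{id}$, while $f(a)=\sup(a\cap\kappa)$ realises an ordinal of $N$ in $[\kappa,j(\kappa))$ since regularity of $\kappa$ makes $f(a)<\kappa$ and fineness gives $\{a:f(a)\geq\alpha\}\supseteq B_\alpha\in\F$ for every $\alpha<\kappa$. For covering, fix a bijection $\pi\colon\lambda\to\mathcal{F}(M)$ lying in $M$; given $X\subseteq N$ with $|X|\leq\lambda$, write each $x\in X$ as $[\pi(\beta_x)]$ for some $\beta_x<\lambda$ and set $F(a)=\{\pi(\beta)(a):\beta\in a\}\in M$; then $|F(a)|<\kappa$, and fineness of $B_{\beta_x}$ delivers $X\subseteq[F]$ with $N\models|[F]|<j(\kappa)$. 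The main obstacle is this last step: because the closure $^\lambda M\subseteq M$ is not available, all functions witnessing the cover must be routed through the single enumeration $\pi\in M$ of $\mathcal{F}(M)$, ensuring that $F$ itself lies in $M$.
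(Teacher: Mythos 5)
Your first two implications are essentially the paper's own proofs and are fine: for (A) $\implies$ (filter extension property) you cover the pointwise image of the generating family, pick a point $s$ of $N$ in the intersection of $j(\F_0)\cap Y$, and pull back along $j$ (the paper does exactly this, with $c$ in place of $s$ and with $X=j''(\F\cap M)$ rather than the generators); for (filter extension property) $\implies$ (near $\lambda$-strong compactness), your transfer via the inclusion $\P_\kappa(\lambda)\subseteq 2^\lambda$ by characteristic functions is an immaterial variant of the paper's transfer via a bijection of $\P_\kappa(\lambda)$ with $\lambda$.

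The third implication, however, has a genuine gap, located exactly at the step you single out as the main obstacle. There is no bijection $\pi\colon\lambda\to\mathcal{F}(M)$ lying in $M$. If $\pi\in M$, then its range $\mathcal{F}(M)$ would be an element of $M$; fixing any $a_0\in\P_\kappa(\lambda)\subseteq M$, the set $\{f(a_0):f\in\mathcal{F}(M)\}$ would then be in $M$ by elementarity (replacement in $H(\chi)$). But this set equals $M$ itself: every $f\in M$ with domain $\P_\kappa(\lambda)$ takes all its values in $M$, and for every $x\in M$ the constant function with value $x$ is definable from $x$ and $\P_\kappa(\lambda)$, hence lies in $\mathcal{F}(M)$. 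So $M\in M$, contradicting foundation. Consequently your covering function $F$ need not lie in $M$, and $[F]$ need not be a point of $N$. Worse, no rerouting can rescue the statement in this form: $N$ is well-founded and $|N|\le|\mathcal{F}(M)|\le\lambda$, so (A) applied to $X=N$ itself would produce $Y\in N$ with $N\subseteq Y$; identifying $N$ with its transitive collapse, transitivity gives $Y\subseteq N$, hence $Y=N\in N$, which is absurd. Thus covering of \emph{arbitrary} $\lambda$-sized subsets of $N$ is outright false for any target of size $\le\lambda$, which is what this ultrapower construction (yours and the paper's alike) produces. I note that the paper's own write-up elides the same difficulty: it asserts $F\in M$ ``since ${}^{\kappa>}M\subseteq M$'', but that hypothesis only yields $F(a)\in M$ for each individual $a$, not $F\in M$; so your instinct that something extra is needed here was correct, but the proposed patch cannot work.

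What the construction does prove is covering for those $X=\{[f_\alpha]_\F:\alpha<\lambda\}$ whose representing sequence $\langle f_\alpha:\alpha<\lambda\rangle$ is itself an element of $M$ -- in particular for $X=j''Z$ with $Z\in M$ and $M\models|Z|\le\lambda$, where one takes a surjection $e\colon\lambda\to Z$ inside $M$ and puts $F(a)=\{e(\alpha):\alpha\in a\}$, so that fineness of $\F$ gives $j(z)\in[F]_\F$ for every $z\in Z$, while $|F(a)|<\kappa$ gives $N\models|[F]_\F|<j(\kappa)$. This weakened covering is all that your proof of the first implication actually invokes (you only cover $j''\E$ for a generating family $\E\in M$ enumerable in $M$), so your cycle of implications does close once (A) is restated with covering restricted to sets of the form $j''Z$, $Z\in M$, $M\models|Z|\le\lambda$. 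But for (A) as literally stated, the last implication is not established, and cannot be established by this construction.
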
 

\begin{proof}[Proof of \emph{(A) $\implies$ ($\kappa$ has the
$\lambda$-filter extension property)}]

Let $\F$ be a $(<\kappa)$-complete filter on $2^\lambda$ which is
generated by a family $\E$ of size $|\E|\leq\lambda$ and let $\A$
be a family of subsets of $2^\lambda$ with $|\A|\leq\lambda$.  Let
$\chi=(2^\lambda)^+$, so $\F, \E, \A \in H(\chi)$. Let $M\prec
H(\chi)$ be a model with $\E,\F,\lambda\in M$, $|M|=\lambda$,
$\lambda\cup\A\cup \E\subseteq M$ and ${}^{\kappa>}M\subseteq
M$. Furthermore, let $j: M \to N$ be an elementary embedding with
critical point $\kappa$ as in (A). For $X:=j''(\F\cap M)$ there exists
$Y\in N$ with $X\subseteq Y$ and $N \models |Y| < j(\kappa)$.

By elementarity of the embedding $j$, $j(\F)$ is a ($<
j(\kappa)$)-complete filter of subsets of $j(2^\lambda)$. Now $j(\F)\cap Y
\subseteq \P(j(2^\lambda))$ and $|j(\F)\cap Y| < j(\kappa)$. Consequently, $N
\models \bigcap (j(\F)\cap Y) \neq\emptyset$, and therefore there
exists some $c\in N\cap \big(\bigcap (j(\F)\cap Y)\big)$. We shall use
$c$ to define a filter $\F^*$ on $2^\lambda$ with $\F\subseteq \F^*$ and such
that all sets in $\A$ are measured by $\F^*$.

To define $\F^*$ we first define an $M$-ultrafilter $\F'$, that is a
family $\F'\subseteq \P(2^\lambda)$ such that $\F'$ is a filter and it
measures every element of $\P(2^\lambda)\cap M$. For $A\in
\P(2^\lambda)\cap M$, we let
\[
A\in \F' \quad \iff \quad c\in j(A)\,.
\]
\begin{itemize}
\item If $A\in \F\cap M$ then $j(A) \in j''(\F\cap M) = X\subseteq
Y$. Moreover, it follows from elementarity of $j$, that $j(A) \in
j(\F)$ since $A\in \F$. Thus, $j(A) \in j(\F)\cap Y$ and therefore,
$c\in j(A)$. We have shown that $\F\cap M\subseteq \F'$. Since
$\E\subseteq M\cap \F$ and $\E$ generates $\F$, it follows that
in fact $\F\subseteq \F'$.

\item Let us show that $\F'$ is ($<\kappa$)-complete: Given a family
  $\{A_i: i< i^* < \kappa\}\subseteq \F'$, we have
\[
j\left(\bigcap_{i < i^*}A_i\right) = \bigcap_{i < i^*}j(A_i) \ni c\,.
\]
Consequently, $\bigcap_{i < i^*}A_i \in \F'$. Note that $\{A_i:i <
  i^*\} \in M$ since ${}^{\kappa>}M\subseteq M$.

\item We next show that $\F'$ is an $M$-ultrafilter on $2^\lambda$. Indeed, if
  $A\in \P(2^\lambda)\cap M$, then $c\in j(A)$ or $c\in j(2^\lambda)\setminus j(A) =
  j(2^\lambda\setminus A)$.
\end{itemize}

Since $\A\subseteq \P(2^\lambda)\cap M$, we have that $\F'$ measures all of
$\A$. We then use $\F'$ to generate (in $V$) a ($<\kappa$)-complete filter
$\F^*$ on $2^\lambda$ by letting for $A\subseteq 2^\lambda$,
\[
A\in \F^*\quad \iff \quad A\supseteq B \text{ for some } B\in \F'\,.
\]
It is standard to verify that $\F^*$ is as required.
\end{proof}

\begin{proof}[Proof of \emph{($\kappa$ has the $\lambda$-filter
extension property) $\implies$ ($\kappa$ is nearly $\lambda$-strongly
compact)}]

Note that by our assumptions $|\P_\kappa(\lambda)|=\lambda$, so using
any fixed bijection between $\P_\kappa(\lambda)$ and $\lambda$, we
can treat filters on $\P_\kappa(\lambda)$ as if they were filters on
$\lambda$. Note also that the $\lambda$-filter extension property of
$\kappa$ implies that every $(<\kappa)$-complete filter $\F_0$ on
$\lambda$ generated by a family $\E$ consisting of $\le\lambda$ sets,
can be extended to a $(<\kappa)$-complete filter on $\lambda$ which
measures a given family $\A$ of $\leq\lambda$ subsets of
$\lambda$. Namely, we shall consider $\lambda$ and $2^\lambda$ as
ordinals where $\lambda<2^\lambda$. Then we can define
$\F_0^+=\{B\subseteq 2^\lambda:\, B\cap \lambda\in \F_0\}$. This is
clearly a $(<\kappa)$-complete filter on $2^\lambda$ and is generated
by $\E$. By the $\lambda$-filter extension property of $\kappa$, there is a
$(<\kappa)$-complete filter $\F^+$ on $2^\lambda$ which extends
$\F_0^+$ and measures $\A$. Let $\F=\{B\cap \lambda:\, B\in \F^+\}$. Clearly
$\F_0\subseteq \F$. Since $\lambda \in \F_0$, we have $\lambda\in
\F_0^+$ and so $\lambda\in \F^+$. This suffices to prove that the
$(<\kappa)$-completeness of $\F^+$ implies the
$(<\kappa)$-completeness of $\F$. Finally, it is clear that $\F$
measures $\A$.

Consider a family $\A$ of $\lambda$-many subsets of
$\P_\kappa(\lambda)$ and let $B_\alpha = \{a\in
\P_\kappa(\lambda):\,\alpha \in a\}\subseteq \P_\kappa(\lambda)$ for
every $\alpha<\lambda$. Let $\F_0$ be the filter on
$\P_\kappa(\lambda)$ generated by the family $\B =
\{B_\alpha:\alpha<\lambda\}$, which is of size at most $\lambda$. Then
$\F_0$ is ($<\kappa$)-complete. By the above remarks, the
$\lambda$-filter extension property of $\kappa$ implies that there is
a $(<\kappa)$-complete filter $\F$ on $\P_\kappa(\lambda)$ with $\F_0
\subseteq \F$ that measures $\A$. The filter $\F$ is fine because each
$B_\alpha$ is already in $\F_0$. Finally, it is non-principal, since
if $\bigcap \F\neq \emptyset$, then $\bigcap \F_0\neq \emptyset$ and
in particular there is $a\in \P_\kappa(\lambda)$ with $a\in
\bigcap_{\alpha<\lambda} B_\alpha$. Hence $\lambda\subseteq a$, which
is impossible since $a\in \P_\kappa(\lambda)$ and $\lambda\ge\kappa$.
\end{proof}

\begin{proof}[Proof of \emph{($\kappa$ is nearly
$\lambda$-strongly compact) $\implies$ (A)}] 

Let $\chi$ and $M$ be as in (A). Note that such an $M$ exists since
$\lambda^{<\kappa}=\lambda$.  Let $S = \P_{\kappa}(\lambda)$. Since
$\lambda\subseteq M$ and ${}^{\kappa>}M\subseteq M$, we have
$\P_{\kappa}(\lambda)\subseteq M$. For $a\in S$, let $C_a = \{b\in
\P_{\kappa}(\lambda) :\,a\subseteq b\}$ and $\E = \{C_a:\,a\in
\P_{\kappa}(\lambda)\}$. Next, let $\F$ be the non-principal
($<\kappa$)-complete fine filter on $S$ measuring every element of
$\P(S)\cap M$, which exists by our assumption. Note that $\F\supseteq
\E$ and is an $M$-ultrafilter on $S$.

We now form a subset of the ultrapower $M^S/\F$, where we consider
only those functions $f:S\to M$ that are in $M$, which we denote by
$(M^S/\F)_M$. Since $\F$ decides every element of $\P(S)\cap M$,
$(M^S/\F)_M$ is an \emph{$M$-ultrapower}, that is the relations $f
=_{\F}g$ and $f \in_{\F}g$ are still equivalence relations, as the
sets $\{a:f(a) = g(a)\}$ and $\{a:f(a) \in g(a)\}$ are both in $M$ if
$f,g\in M$.

Since $\kappa$ is uncountable and $\F$ is ($<\kappa$)-complete, it is
closed under intersections of length $\leq\omega$. Therefore we one
can prove in a standard way that there cannot be an infinite
decreasing sequence
\[
  [a_0]_\F \ni [a_1]_\F \ni \dots \ni [a_n]_\F \ni \dots
\]
of elements of $(M^S/\F)_M$, see for example \cite[Prop. 5.3,
Pg. 48]{K}. Hence, $(M^S/\F)_M$ is well-founded.

Let $N = (M^S/\F)_M$ and define $j:M\to N$, with
$j(a) = [a]_\F$. It follows that $j$ is an elementary embedding.

Next, let $X =
\{[f_\alpha]_\F: \alpha < \lambda\}$ be a subset of $N$. Define
$F:\P_{\kappa}(\lambda) \to M$, by $F(a) = \{f_\alpha(a):\alpha
\in a\}$, which is $\in M$ since ${}^{\kappa>}M\subseteq
M$. Put $Y = [F]_\F\in N$ and we shall show that $X\subseteq Y$. We need to
show that for every $\alpha < \lambda$ we have $[f_\alpha]_\F \in
[F]_\F$, i.e. that $\{a: f_\alpha(a) \in F(a)\}\in \F$. But
\[
  \{a: f_\alpha(a) \in F(a)\} = \{a:\alpha \in a\}=C_{\{\alpha\}}\in \F_0.
\]
To see
that $N \models |Y| < j(\kappa)$, note that $|F(a)| \leq |a| < \kappa$
for each $a\in S$, so that $N \models |[F]_\F| < j(\kappa)$.

Finally, observe that $j(\alpha)=\alpha$ for $\alpha<\kappa$. Indeed,
if not, let $\alpha < \kappa$ be the least such that $j(\alpha) >
\alpha$. If $j(\alpha)>[f]_\F = \alpha$, then $\{a\in S: f(a)<
\alpha\}\in \F$, and so by ($<\kappa$)-completeness there is a $\beta
< \alpha$ such that $\{a \in S: f(a)=\beta\} \in\F$. But then,
$[f]_\F = j(\beta) = \beta<\alpha$, a contradiction. Taking
$X=\{[\alpha]_\F:\alpha<\kappa\}$ in the argument of the preceeding
paragraph, we see that $\kappa < j(\kappa)$ so that $j$ has critical
point $\kappa$.
\end{proof}

\begin{lemma}\label{lemma2} (A) $\implies$ (B) $\implies$ ($\kappa$
has the $\lambda$-filter extension property).
\end{lemma}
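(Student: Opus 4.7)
The plan is to prove (A) $\implies$ (B) by a standard L\"owenheim--Skolem chain construction that produces a model $M$ of the shape demanded in (A), and then to obtain (B) $\implies$ ($\kappa$ has the $\lambda$-filter extension property) by re-running the seed-point argument from the proof of Lemma~\ref{lemma1}, only this time placing the filter data inside a model $M$ that is supplied by (B) rather than hand-picked.

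For (A) $\implies$ (B), I fix a sufficiently large regular $\chi$ and $T\in H(\chi)$. I would build an $\subseteq$-increasing elementary chain $\langle M_\alpha:\alpha<\kappa\rangle$ of submodels of $H(\chi)$, each of cardinality $\lambda$, with $T\in M_0$ and $\lambda\subseteq M_0$, and with $M_{\alpha+1}$ chosen so as to contain every sequence of length $<\kappa$ of elements of $M_\alpha$. Since there are at most $\lambda^{<\kappa}=\lambda$ such sequences, L\"owenheim--Skolem allows $M_{\alpha+1}$ to be of size $\lambda$. Taking unions at limits and setting $M=\bigcup_{\alpha<\kappa}M_\alpha$, one has $M\prec H(\chi)$, $|M|=\lambda\cdot\kappa=\lambda$, $T,\lambda\subseteq M$, and ${}^{\kappa>}M\subseteq M$; the last property holds because, by regularity of $\kappa$, every $<\kappa$-sequence of elements of $M$ has all its entries inside some fixed $M_\alpha$ and is therefore already an element of $M_{\alpha+1}$. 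Applying (A) to this $M$ yields the embedding required by (B).

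For (B) $\implies$ ($\kappa$ has the $\lambda$-filter extension property), I start from a $(<\kappa)$-complete filter $\F$ on $2^\lambda$ generated by a family $\E$ with $|\E|\le\lambda$ and from a family $\A\subseteq\P(2^\lambda)$ of size $\le\lambda$. I set $\chi=(2^\lambda)^+$ and apply (B) with $T=\{\F,\E,\A\}$ to obtain $M\prec H(\chi)$ containing $T$, with $|M|=\lambda$, $\lambda\subseteq M$, ${}^{\kappa>}M\subseteq M$, together with an elementary embedding $j:M\to N$ of critical point $\kappa$ satisfying the covering condition of (A). Because $\E,\A\in M$ are each of size $\le\lambda$ and $\lambda\subseteq M$, elementarity produces surjections $\lambda\to\E$ and $\lambda\to\A$ inside $M$, which forces $\E\subseteq M$ and $\A\subseteq M$. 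From here the argument of Lemma~\ref{lemma1} carries through verbatim: take $X=j''\E$, pick $Y\in N$ with $X\subseteq Y$ and $N\models|Y|<j(\kappa)$, use the $(<j(\kappa))$-completeness of $j(\F)$ in $N$ to obtain a seed $c\in N\cap\bigcap(j(\F)\cap Y)$, define the $M$-ultrafilter $\F'=\{A\in\P(2^\lambda)\cap M:c\in j(A)\}$ (which is $(<\kappa)$-complete since any $<\kappa$-indexed family in $M$ remains an element of $M$), and let $\F^*$ be the $V$-filter generated by upward closure from $\F'$. Then $\F^*$ measures $\A$ since $\A\subseteq M$, and contains $\F$ because each $F\in\F$ lies above some intersection $\bigcap\E_0$ with $\E_0\subseteq\E$ of size $<\kappa$, and such $\E_0$ is an element of $M$ by ${}^{\kappa>}M\subseteq M$, so $\bigcap\E_0\in\F'$.

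The main obstacle is purely bookkeeping: verifying that $\E\subseteq M$ and $\A\subseteq M$ follow from the hypotheses of (B) (an elementarity argument using $\lambda\subseteq M$), and that $(<\kappa)$-completeness of the induced $M$-ultrafilter $\F'$ applies to the relevant $<\kappa$-subfamilies of $\E$ (which is exactly where the closure property ${}^{\kappa>}M\subseteq M$ in (A) and (B) is used). No new ideas beyond Lemma~\ref{lemma1} are needed; the point of Lemma~\ref{lemma2} is precisely that (B) packages the existence of the model $M$ together with the embedding $j$ in one statement.
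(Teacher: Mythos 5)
Your proposal is correct and follows essentially the same route as the paper: for (A) $\implies$ (B) the paper likewise just fixes a model $M$ with the required closure properties (whose existence, guaranteed by $\lambda^{<\kappa}=\lambda$, you spell out via the elementary chain construction) and applies (A); for (B) $\implies$ (filter extension property) the paper also chooses $T$ containing $\F,\E,\A$, uses elementarity and $\lambda\subseteq M$ to get $\E,\A\subseteq M$, and then reruns the seed-point argument of Lemma \ref{lemma1}. Your minor variations (taking $X=j''\E$ rather than $j''(\F\cap M)$, and deducing $\F\subseteq\F^*$ via $<\kappa$-intersections of generators lying in $M$) are if anything slightly more careful than the paper's wording, but they are the same argument.
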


\begin{proof}[Proof of \emph{(A) $\implies$ (B)}] Let us fix any
$M\prec H(\chi)$ satisfying $T\in M$, $|M|=\lambda$, $\lambda\subseteq
M$ and ${}^{\kappa>}M\subseteq M$. This is possible because $T\in
H(\chi)$. We then apply (A) to obtain (B).
\end{proof}

\begin{proof}[Proof of \emph{(B) $\implies$ ($\kappa$ has the
$\lambda$-filter extension property)}] Let $\F_0, \E$ and $\A$ be as in the
definition of the $\lambda$-filter extension property and let $T =
\{\E,\F_0,\A,\lambda, 2^\lambda\}$. Let $\chi=(2^\lambda)^+$. As guaranteed by (B),
we can choose
$M\prec H(\chi)$ satisfying $T\in M$, $|M|=\lambda$, $\lambda\subseteq
M$ and ${}^{\kappa>}M\subseteq M$ and
an embedding $j$ satisfying the properties stated in (B). Since $|\A|=\lambda$ and
$\A,\lambda\in M$, there is a bijection $f:\lambda\to \A$ with $f\in
M$ (by elementarity). Hence, for every $\alpha < \lambda$,
$f(\alpha)\in M$ and in particular, $\A = \{f(\alpha): \alpha <
\lambda\}\subseteq M$. Similarly for $\E$.

Hence, $M$ and $j$ satisfy the assumptions of the proof of (A)
$\implies$ ($\kappa$ has the $\lambda$-filter extension property) above and
the same proof gives us that $\kappa$ has the $\lambda$-filter extension
property holds.
\end{proof}

\begin{proof}[Proof of Theorem \ref{T:EmbCh}] It is proved in \cite{W}, by
an argument credited to Cody and White, that (C) is equivalent to
$\kappa$ being nearly $\lambda$-strongly compact. Together with the
results of Lemma \ref{lemma1} and Lemma \ref{lemma2}, this proves the
theorem.
\end{proof}

\begin{rem} Note that in the proof of (A) $\implies$ ($\kappa$ has the
$\lambda$-filter extension property) in Lemma \ref{lemma1}, the
($<\kappa$)-complete filter $\F$ could have been taken on any set $S$
instead of $2^\lambda$. This shows that Definition \ref{D:2} can be
restated as:
\begin{itemize}
\item For every set $S$ and every family $\A\subseteq \P(S)$ with
$|\A|\leq\lambda$, every ($<\kappa$)-complete filter $\F$ over $S$
generated by at most $\lambda$ many elements can be extended to a
($<\kappa$)-complete filter over $S$ measuring $\A$.
\end{itemize}
\end{rem}

\section{Weakly Compact Cardinals}\label{S:proof}

The following Theorem \ref{T:ECwCC} gives several characterisations of
weakly compact cardinals in terms of productivity of topological
spaces. This theorem, which we believe is of independent interest,
will be used in Section \ref{sec:consistency} to give a lower bound
for the consistency strength of the notion of $\lambda$-square
compactness. Many parts of this theorem are known but not always easy to find in the literature. We in particular point out the direct simple proof of
${\rm (1)}\iff {\rm (3)}$ by Luca Motto Ros in \cite{luca}, while a more general statement and
further references, some of them quite old, can be found in \cite{lucaIJM}.

\begin{thm}\label{T:ECwCC} Suppose that
$\kappa=\kappa^{<\kappa} >\aleph_0$. Then the following are equivalent:
\begin{enumerate}[{\rm (1)}]
\item $\kappa$ is a weakly compact cardinal;
\item Suppose that $\{ X_\gamma: \,\gamma \le \Gamma\}$ for some
$\Gamma\le \kappa$ is a family of $(<\kappa)$-compact spaces such that
$w(X_\gamma) \leq \kappa$ for all $\gamma\in\Gamma$.  Then the
$(<\kappa)$-box product space $\prod_{\gamma \in \Gamma}X_\gamma$ is
also $(<\kappa)$-compact;
\item $2^\kappa$ is $(<\kappa)$-compact in the $(<\kappa)$-box
topology;
\item $\kappa$ has the $\kappa$-filter extension property;
\item Suppose that $\{ X_\gamma: \,\gamma \le \Gamma\}$ for some
$\Gamma\le \kappa$ is a family of $(<\kappa)$-compact spaces such that
$w(X_\gamma) \leq \kappa$ for all $\gamma\in\Gamma$.  Then the
Tychonoff product space $\prod_{\gamma \in \Gamma}X_\gamma$ is also
$(<\kappa)$-compact;
\item $\kappa$ is a weakly square compact cardinal;
\item The product of any two $(<\kappa)$-compact GO-spaces of weight
$\leq\kappa$ is $(<\kappa)$-compact.
\end{enumerate}
\end{thm}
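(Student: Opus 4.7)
The strategy is to prove the equivalences as a cycle, leveraging the already-established Theorem \ref{T:LFEP2} together with the classical equivalence (1) $\iff$ (3). Since $\kappa = \kappa^{<\kappa}$, the instance $\lambda = \kappa$ of Theorem \ref{T:LFEP2} gives (2) $\iff$ (3) $\iff$ (4) at once. For (1) $\iff$ (3) I would invoke the direct proof of Motto Ros from \cite{luca}: in one direction, a $(<\kappa)$-box open cover of $2^\kappa$ with no small subcover is translated into a family of closed sets with the $(<\kappa)$-intersection property that cannot be measured by any $(<\kappa)$-complete filter, contradicting the filter property characterisation of weak compactness; in the other direction, Sublemma \ref{newref} delivers that $\kappa$ is strong limit, and then a tree-style analysis of basic box-opens extracts the tree property, yielding weak compactness.

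For the topological items (5), (6), (7), the plan is the chain (2) $\implies$ (5) $\implies$ (6) $\implies$ (7). The step (2) $\implies$ (5) is immediate because the $(<\kappa)$-box topology refines the Tychonoff topology on any product, so every Tychonoff open cover is also a box open cover and hence admits a $(<\kappa)$-subcover by (2). The step (5) $\implies$ (6) is obtained by specialising (5) to the two-element family $\{X, X\}$, so that $X \times X$ is $(<\kappa)$-compact whenever $X$ is $(<\kappa)$-compact of weight $\leq \kappa$. The step (6) $\implies$ (7) follows from the disjoint-sum argument used in the introduction: two $(<\kappa)$-compact GO-spaces $X_1, X_2$ of weight $\leq \kappa$ assemble into a $(<\kappa)$-compact space $X_1 \oplus X_2$ of weight $\leq \kappa$, so $(X_1 \oplus X_2)^2$ is $(<\kappa)$-compact by (6), and $X_1 \times X_2$ is a closed subspace.

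The main obstacle is the remaining implication (7) $\implies$ (1) which closes the cycle. The plan is a proof by contrapositive: assuming $\kappa$ is not weakly compact, I would exhibit two $(<\kappa)$-compact GO-spaces of weight $\leq \kappa$ whose product fails $(<\kappa)$-compactness, contradicting (7). Since $\kappa^{<\kappa} = \kappa$ already forces regularity, the failure of weak compactness must be either a failure of strong limit or a failure of the tree property. In the former case, one can use the lexicographically ordered power $2^\theta$ for a witnessing $\theta < \kappa$ with $2^\theta \geq \kappa$; in the latter, one extracts from a $\kappa$-Aronszajn tree two GO-spaces (for instance the tree itself in the interval topology from a fixed lexicographic order, paired with a completion encoding potential branches) whose product isolates the missing branch, producing a closed family with the $(<\kappa)$-intersection property and empty intersection. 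The technical heart here is the combinatorial encoding, and the authors accordingly refer to the more general framework of \cite{luca, lucaIJM}; the genuinely new ingredient in the present paper is simply the inclusion of the $\kappa$-filter extension property (4) into the cluster via Theorem \ref{T:LFEP2}.
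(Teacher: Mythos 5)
Your handling of items (1)--(6) is essentially the paper's: the equivalence of (2), (3) and (4) is precisely the $\lambda=\kappa$ instance of Theorem \ref{T:LFEP2} (legitimate since $\kappa^{<\kappa}=\kappa$), and the chain (2) $\Rightarrow$ (5) $\Rightarrow$ (6) $\Rightarrow$ (7) is correct as you describe it (the box topology refines the Tychonoff topology, specialise to a square, and use the $X_1\oplus X_2$ closed-subspace trick for two GO-spaces). The paper proves (1) $\Rightarrow$ (3) directly via the tree property (Lemma \ref{fromtreetotopology}) rather than via the filter property, but your appeal to \cite{luca} for (1) $\Leftrightarrow$ (3) is consistent with the paper's own remark that such a proof exists there; note only that once (7) $\Rightarrow$ (1) is available, a separate proof of (3) $\Rightarrow$ (1) is redundant.

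The genuine gap is at (7) $\Rightarrow$ (1), which is the one implication requiring real work and which you leave as ``the combinatorial encoding''. Your case split (failure of strong limit versus failure of the tree property, regularity being free from $\kappa^{<\kappa}=\kappa$) can in principle be carried out --- it amounts to reproving Hanf's theorem that a non-weakly-compact $\kappa$ carries a linear order of size $\kappa$ in which every increasing or decreasing well-ordered subset has size $<\kappa$ --- but the construction you sketch in the tree case is not verified and is not the one that works. The Hajnal--Juh\'{a}sz counterexample does not pair ``the tree in the interval topology'' with ``a completion encoding potential branches''; it takes a \emph{single} continuously ordered set $L$ (the gap-filling completion of the Hanf order, which keeps size $\kappa$ exactly because $\kappa^{<\kappa}=\kappa$) and equips it with \emph{two different} GO topologies $X_l$ and $X_r$, generated by the half-open intervals $[x,y)$ and $(x,y]$ respectively. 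Order-completeness together with the absence of monotone well-ordered subsets of size $\kappa$ is what makes each of $X_l$ and $X_r$ $(<\kappa)$-compact, and the diagonal is a closed discrete set of size $\kappa$ in $X_l\times X_r$, killing $(<\kappa)$-compactness of the product. Your sketch gives no reason why either of your two proposed spaces is $(<\kappa)$-compact, nor why their product fails to be, and the references you lean on (\cite{luca}, \cite{lucaIJM}) concern the equivalence (1) $\Leftrightarrow$ (3), not the GO-space statement; the relevant sources are \cite{Hanf} and \cite{HJ}. Until this implication is supplied, the cycle does not close and (5), (6), (7) are not shown to imply (1).
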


We recall that a {\em generalised ordered space (GO-space)} is a
Hausdorff space equipped with a linear order and having a base of
order-convex sets.

We also recall that a cardinal $\kappa$ is said to have {\em the tree
property} if every tree of cardinality $\kappa$ whose every level has
cardinality $<\kappa$, has a branch of cardinality $\kappa$. It is
well known that $\kappa$ is weakly compact if and only if it is
strongly inaccessible and has the tree property, see \cite{K}.

Many parts of Theorem \ref{T:ECwCC} follow from what has already been
proved. Once we prove the following Lemma \ref{fromtreetotopology}, we
shall be ready for the proof of Theorem \ref{T:ECwCC}.

\begin{lemma}\label{fromtreetotopology} If a cardinal $\kappa$ is
weakly compact then $2^\kappa$ is $(<\kappa)$-compact in the
$(<\kappa)$-box topology.
\end{lemma}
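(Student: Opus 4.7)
My approach is to invoke the tree-property characterization of weak compactness: $\kappa$ is weakly compact if and only if $\kappa$ is strongly inaccessible and every tree of height $\kappa$ whose every level has size strictly less than $\kappa$ admits a cofinal branch. Starting from an arbitrary open cover $\U$ of $2^\kappa$ in the $(<\kappa)$-box topology, I will refine it to a cover $\V$ by basic open sets of the form $[s] = \{f \in 2^\kappa : f \supseteq s\}$, where $s$ is a partial function on $\kappa$ with $|\mathrm{dom}(s)| < \kappa$; any sub-cover of $\V$ of size $<\kappa$ lifts back to one for $\U$. The plan is then to assume for contradiction that no such sub-cover exists, and to produce a branch through a carefully constructed tree whose existence contradicts the assumption.

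Precisely, I would define $T \subseteq 2^{<\kappa}$ to consist of those $t:\alpha \to 2$ (for some $\alpha < \kappa$) such that $[t]$ is \emph{not} covered by any sub-family of $\V$ of cardinality $<\kappa$. Since $[t] \subseteq [t\uhr\beta]$ whenever $\beta \le \alpha$, the set $T$ is closed under initial segments, hence is a subtree of $2^{<\kappa}$; moreover its levels satisfy $|T_\alpha| \leq 2^\alpha < \kappa$ by strong inaccessibility. The heart of the argument is to show that every level of $T$ is non-empty. For the successor step I would use the disjoint partition $[t] = [t^\frown 0] \sqcup [t^\frown 1]$ together with the fact that a union of two sub-families of $\V$ of size $<\kappa$ still has size $<\kappa$. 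For a limit ordinal $\alpha < \kappa$ I would use strong inaccessibility to obtain $|2^\alpha| < \kappa$, and then regularity of $\kappa$ to absorb $|2^\alpha|$-many sub-families each of size $<\kappa$ into one sub-family of size $<\kappa$, which would contradict the assumed uncoverability of $2^\kappa = \bigsqcup_{t \in 2^\alpha}[t]$.

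Once every level of $T$ is shown non-empty, the tree property supplies a cofinal branch $b \in 2^\kappa$. To conclude, I would pick some $[s] \in \V$ containing $b$; since $|\mathrm{dom}(s)| < \kappa$ and $\kappa$ is regular, $\mathrm{dom}(s) \subseteq \alpha$ for some $\alpha < \kappa$. Then $b\uhr\alpha$ extends $s$ as a partial function, whence $[b\uhr\alpha] \subseteq [s]$, giving a one-element sub-cover of $[b\uhr\alpha]$ and contradicting the defining property of $b\uhr\alpha \in T_\alpha$. The main, though mild, obstacle is the limit-level non-emptiness check, where one must simultaneously apply strong inaccessibility and the regularity of $\kappa$ to control cardinal arithmetic on families of sub-covers; the rest of the argument is a routine tree-branch construction.
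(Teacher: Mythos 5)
Your proposal is correct and is essentially the paper's own argument in dual form: the paper phrases $(<\kappa)$-compactness via $(<\kappa)$-complete filters of closed sets and builds the tree of nodes $s\in 2^{<\kappa}$ whose basic cone meets every filter set, while you phrase it via open covers and build the tree of nodes whose cone admits no subcover of size $<\kappa$ --- under the standard cover/closed-set duality these are the same tree. Both arguments then finish identically: strong inaccessibility keeps the levels small, a pigeonhole/regularity argument shows every level is nonempty, and the tree property yields a cofinal branch that produces the desired point (respectively, contradiction).
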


\begin{proof} One can note that $2^\kappa$ can be viewed as a tree $T$
of height $\kappa$ such that each level has cardinality $<\kappa$,
since $\kappa$ is a strong limit. Note that branches are closed sets
in the ($<\kappa$)-box topology described below.

Suppose that $\F$ is a ($<\kappa$)-complete filter of closed sets in
$2^\kappa$ equipped with the $(<\kappa$)-box topology. We define a tree
$\hat{T}$ in the following manner: an element
$s \in 2^\beta$, where $\beta < \kappa$, is in $\hat{T}$ if for every
$F\in\F$ there is a branch $b\in F$ such that $b\upharpoonright \beta
= s$. In other words, the open set $U_s$ intersects $F$ for every
$F\in\F$, where $U_s = \prod_{\alpha<\kappa}V_\alpha$ with $V_\alpha =
\{s(\alpha)\}$ for $\alpha<\beta$ and $V_\alpha = \{0,1\}$ for
$\beta\leq\alpha<\kappa$.

Let us show that for any $\beta < \kappa$, $\hat{T}$ has nodes on the
$\beta^{{\rm th}}$ level. Suppose that for any $s \in 2^\beta$,
$s\notin\hat{T}$. Then for any $s \in 2^\beta$ there exists an
$F_s\in\F$ such that $F_s\cap U_s=\emptyset$. Now
$2^\beta<\kappa$ and $\F$ is ($<\kappa$)-complete so that $F =
\bigcap_{s \in 2^\beta}F_s\in\F$. However, $U = \bigcup_{s \in 2^\beta}U_s =
2^\kappa$ and $F\cap U=\emptyset$, contradicting $F\neq\emptyset$.

Thus, $\hat{T}$ is a $\kappa$-tree. By the tree property, there is a
$\kappa$-branch $b\in [\hat{T}]$ and by definition of $\hat{T}$ it
follows that $b\in F$ for every $F\in\F$. Indeed, if $b\notin F$ for
some $F\in\F$, then there is some $\beta<\kappa$ and an open
neighbourhood $U_b$ of $b$ of the form $\prod_{\alpha<\kappa}V_\alpha$
with $V_\alpha = \{b(\alpha)\}$ for $\alpha<\beta$ and $V_\alpha =
\{0,1\}$ for $\beta\leq\alpha<\kappa$, satisfying $U_b\cap
F=\emptyset$. But $s=b\upharpoonright\beta\in\hat{T}$, so that $U_s =
U_b$ intersects every $F\in\F$, a contradiction. Consequently,
$\bigcap\F\neq\emptyset$.
\end{proof}

We can now give a proof of Theorem \ref{T:ECwCC}.

\begin{proof}[Proof of Theorem \ref{T:ECwCC}] (1) $\implies$ (3) is the content of
Lemma \ref{fromtreetotopology}. Therefore (2), (3) and (4) are equivalent by Theorem
\ref{T:LFEP2}.

Implications (2) $\implies$ (5) $\implies$ (6) $\implies$ (7) are obvious.

We are only left with (7) $\implies$ (1). This follows from the
argument in the paper of Hajnal and Juh\'{a}sz \cite{HJ}, which we
recall as it is not phrased in these terms in the original paper.  By
a result of Hanf in \cite{Hanf}, if $\kappa$ is not weakly compact
then there exists a linearly ordered set $(L,\leq)$ with $|L|\ge
\kappa$, in which every decreasing or increasing well-ordered (by
$\leq$) subset is of cardinality $<\kappa$. By taking a suborder if
necessary, we may assume that $|L|=\kappa$. To get the counterexample
in \cite{HJ}, one has to assume that $(L,\leq)$ is continuously
ordered, that is for all non-empty $A, B\subseteq L$, if $A<B$, then
there is $c$ such that $A<c<B$, where we write $A<B$ for subsets of
$L$ if $a<b$ for all $a\in A$ and $b\in B$, a property which can be
obtained in a standard way by an inductive construction in which all
the gaps $(A, B)$ are filled. This procedure (which leads to what is
called a linear compactification $cL$ of the original $L$) may in
general increase the size of $L$ but in our particular case it does
not, since $\kappa^{<\kappa}=\kappa$.  The linear
compactification will preserve the property that every decreasing or
increasing well-ordered subset is of cardinality $<\kappa$. So we
shall assume that $L$ is already continuous.

To pass to the topological spaces, we take two copies of $L$, which we
denote by $X_r$ and $X_l$. The basic open sets are half-open
intervals of the form $(x,y]$ for $X_r$ and the ones of the form
$[x,y)$ for $X_l$. This makes $X_l$ and $X_r$ GO spaces. Both of these
spaces have size and so weight $\le\kappa$. It is shown in \cite{HJ}
that both $X_l$ and $X_r$ are $(<\kappa)$-compact, while their product
$X_l \times X_r$ is not.
\end{proof}

\section{Consistency strength of $2^\kappa$-square
  compactness}\label{sec:consistency}

As mentioned above, \cite{HJ} proves that a fully square compact
cardinal must be weakly compact and it is clear from the definition
and the equivalences listed in Theorem \ref{T:SCEq} that a strongly
compact cardinal is square compact. We shall consider the consistency
strength of $\lambda$-square compactness, getting the same lower bound
of weak compactness, but a much better upper bound than strong
compactness.

Our main findings can be summarised as follows. We remind the reader
that the definition implies that being $\lambda$-square compact is a
notion that increases in strength with $\lambda$.

\begin{thm}\label{main} Suppose that $\kappa$ is uncountable. Then
\begin{enumerate}[{\rm (1)}]
\item\label{lower} If $\kappa=\kappa^{<\kappa}$ is $\lambda$-square
compact for any $\lambda\ge\kappa$, then $\kappa$ is weakly compact.
\item\label{upper} It is consistent modulo large cardinals that the
first weakly compact cardinal $\kappa$ is $2^\kappa$-square compact.
\end{enumerate}
\end{thm}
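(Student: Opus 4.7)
For part (1), the plan is short. Being $\lambda$-square compact is monotone in $\lambda$: if $\lambda'\ge\lambda$ and $X$ is $(<\kappa)$-compact with $w(X)\le\lambda$, then also $w(X)\le\lambda'$, so $\lambda'$-square compactness of $\kappa$ forces $X\times X$ to be $(<\kappa)$-compact. Hence being $\lambda$-square compact for any $\lambda\ge\kappa$ implies being $\kappa$-square compact, which is the definition of weakly square compact. Under the hypothesis $\kappa=\kappa^{<\kappa}>\aleph_0$, the equivalence $(1)\iff(6)$ of Theorem \ref{T:ECwCC} then delivers weak compactness of $\kappa$.

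For part (2), the plan is to combine the consistency result of Cody et al.\ \cite{Codyetal} on Schanker's nearly $\lambda$-supercompact cardinals with the equivalences established in Sections \ref{S:LFEP} and \ref{S:EmbChLambda}. Concretely, starting from a suitable large cardinal assumption, \cite{Codyetal} provides a model in which the first weakly compact cardinal $\kappa$ is nearly $2^\kappa$-supercompact. Dropping normality, such a $\kappa$ is \emph{a fortiori} nearly $2^\kappa$-strongly compact in the sense of Definition \ref{D:LNSC}. Since weak compactness makes $\kappa$ inaccessible, one has $(2^\kappa)^{<\kappa}=2^\kappa$, so Theorem \ref{T:EmbCh} applies with $\lambda=2^\kappa$ and yields the $2^\kappa$-filter extension property at $\kappa$, and Theorem \ref{fromextensiontosquare} then converts this property directly into $2^\kappa$-square compactness.

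The main obstacle is the consistency result itself, which is not proved here but imported from \cite{Codyetal}: one must arrange simultaneously that $\kappa$ is the \emph{first} weakly compact cardinal and that $\kappa$ retains near $2^\kappa$-supercompactness. The standard recipe, making $\kappa$ the first weakly compact by a preparatory iteration below $\kappa$ that destroys large cardinal structure there, is delicate because one has to verify that such an iteration preserves enough of the nearly supercompact embedding/filter structure at $\kappa$ to cover the parameter $\lambda=2^\kappa$; this is precisely the sort of preservation theorem proved in \cite{Codyetal}. All other steps in our argument amount to routine book-keeping through the filter extension and embedding characterisations developed in the preceding sections.
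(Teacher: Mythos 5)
Your proposal is correct and follows essentially the same route as the paper: part (1) via monotonicity in $\lambda$ plus the equivalence of weak square compactness and weak compactness in Theorem \ref{T:ECwCC}, and part (2) by importing Theorem 7 of \cite{Codyetal}, dropping normality to get near $2^\kappa$-strong compactness, and passing through Theorem \ref{T:EmbCh} and Theorem \ref{fromextensiontosquare}. The only cosmetic differences are that your appeal to inaccessibility for $(2^\kappa)^{<\kappa}=2^\kappa$ is unnecessary (this is plain cardinal arithmetic), and your citation of Theorem \ref{fromextensiontosquare} for the final step is actually more precise than the paper's own wording.
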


Item (\ref{upper}) above implies that from $\kappa$ being
$2^\kappa$-square compact, one cannot infer that $\kappa$ has any
large cardinal strength larger than weak compactness. Namely:

\begin{cor}\label{HJ} Suppose that $\varphi$ is a large cardinal
property such that any $\kappa$ satisfying $\varphi(\kappa)$ has an
unbounded set of weakly compact cardinals below. Then, it is
consistent modulo large cardinals that there is a $\kappa$ which is
$2^\kappa$-square compact, but it does not have property $\varphi$.
\end{cor}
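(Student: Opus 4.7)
The plan is to deduce this immediately from Theorem \ref{main}(\ref{upper}). That theorem provides, modulo large cardinals, a model in which the \emph{first} weakly compact cardinal $\kappa$ is $2^\kappa$-square compact. I would work in that model and simply observe that $\kappa$ cannot satisfy $\varphi$.

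More precisely, in that model the set $W$ of weakly compact cardinals below $\kappa$ is empty, since $\kappa$ is the least weakly compact cardinal. In particular, $W$ is bounded in $\kappa$ (vacuously). By the hypothesis on $\varphi$, any cardinal $\mu$ with $\varphi(\mu)$ must have an unbounded set of weakly compacts below $\mu$; contrapositively, $\kappa$ cannot satisfy $\varphi$. Combined with $2^\kappa$-square compactness of $\kappa$ guaranteed by Theorem \ref{main}(\ref{upper}), this gives exactly the conclusion of the corollary.

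There is essentially no obstacle here, since all the content has already been packaged into Theorem \ref{main}(\ref{upper}); the corollary is just the observation that the first weakly compact cardinal trivially fails any large cardinal property whose holders must be limits of (or at least have unboundedly many) weakly compact cardinals below. One minor point worth noting in the write-up is that the hypothesis ``unbounded set of weakly compact cardinals below'' is strictly stronger than needed --- even ``at least one weakly compact below'' would suffice for the argument --- but stating the corollary in the given form makes it directly applicable to standard large cardinal notions such as measurability, $\lambda$-strong compactness, supercompactness, and the like, none of which can be possessed by the first weakly compact cardinal.
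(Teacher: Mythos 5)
Your proof is correct and follows exactly the route the paper intends: the corollary is stated there as an immediate consequence of Theorem \ref{main}(\ref{upper}), since in the witnessing model $\kappa$ is the \emph{first} weakly compact cardinal, so the set of weakly compacts below it is empty and $\varphi(\kappa)$ fails by the hypothesis on $\varphi$. Nothing to add; your remark that one weakly compact below would already suffice is a fair observation, but the stated form matches the standard properties (measurable, indescribable, strongly compact) the paper wants to rule out.
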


We recall that being strongly compact, being measurable or even just
being indescribable are large notions such that any cardinal
satisfying them has an unbounded set of weakly compact cardinals
below. See \cite{K}.

\begin{proof}[Proof of Theorem \ref{main}] (1) follows from the
Theorem \ref{T:ECwCC}. For item (2), we shall need to refer to the
notion of nearly $\lambda$-supercompactness, defined by Jason Schanker
in \cite{Schanker}. In the case of $\lambda=\lambda^{<\kappa}$,
Theorem 1.4. of \cite{Schanker} gives that $\kappa$ is nearly
$\lambda$-supercompact if and only if for every family $\A$ of $\lambda$-many
subsets of $\P_\kappa(\lambda)$ and a collection $\mathscr G$ of
$\lambda$-many functions from $\P_\kappa(\lambda)$ to $\lambda$, there
is a non-principal ($<\kappa$)-complete fine $\mathscr G$-normal
filter $\F$ over $\P_\kappa(\lambda)$ which measures $\A$.

It is then evident from Definition \ref{D:LNSC} that in the case of
$\lambda=\lambda^{<\kappa}$, every nearly $\lambda$-supercompact
cardinal is nearly $\lambda$-strongly compact and therefore by Theorem
\ref{T:EmbCh}, $\kappa$ has the $\lambda$-filter extension property.

Now let us place ourselves in any universe of set theory in which some
cardinal $\kappa$ is nearly $\lambda$-supercompact for some
$\lambda=\lambda^{<\kappa}$. An example of such a universe is one in
which $\kappa$ is a supercompact cardinal and $\lambda=2^\kappa$,
since any cardinal which is $\lambda$-supercompact is nearly
$\lambda$-supercompact (see Observation 1.2 (1) of
\cite{Schanker}). Theorem 7 of \cite{Codyetal} gives a forcing
extension of such a universe in which $2^\kappa=\lambda$ and $\kappa$
is simultaneously the first weakly compact cardinal and nearly
$\lambda$-supercompact. In particular $\kappa$ is nearly
$2^\kappa$-strongly compact, has the $2^\kappa$-filter extension
property and, by Theorem \ref{T:EmbCh} is $2^\kappa$-square compact.
\end{proof}

It is of course interesting to ask if in Theorem \ref{main} we can say
something about $\lambda$-square compactness for
$\lambda>2^\kappa$. The methods of \cite{Codyetal} cannot help us
here, since they necessarily make $2^\kappa=\lambda$. There is another
way for obtaining near $\lambda$-super compactness without
measurability, as in \cite{Schanker}, but as proved in the same paper,
this method is limited to $\lambda\le\kappa^+$.

We cannot hope to use nearly $\lambda$-strong compactness to show that
the fully square compact cardinals have low consistency strength, as
can be seen by the following theorem.

\begin{thm}\label{wcompact} Suppose that $\kappa$ is nearly
$\lambda$-strongly compact for unboundedly many $\lambda$. Then
$\kappa$ is strongly compact.
\end{thm}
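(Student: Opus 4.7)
The plan is to show that $\kappa$ is $\lambda$-strongly compact for every $\lambda\ge\kappa$; this suffices since strong compactness is precisely the conjunction of $\lambda$-strong compactness for all $\lambda\ge\kappa$. I shall invoke the standard equivalence that $\kappa$ is $\lambda$-strongly compact if and only if there is a $(<\kappa)$-complete fine ultrafilter on $\P_\kappa(\lambda)$, so the task reduces to producing such an ultrafilter from near $\lambda'$-strong compactness for a sufficiently large $\lambda'$ drawn from the unbounded class of cardinals furnished by the hypothesis.

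Fix $\lambda\ge\kappa$ and choose $\lambda'$ so that $\kappa$ is nearly $\lambda'$-strongly compact and $\lambda'\ge 2^{\lambda^{<\kappa}}$; such a $\lambda'$ exists by the unboundedness assumption. Consider the projection $\pi:\P_\kappa(\lambda')\to \P_\kappa(\lambda)$ defined by $\pi(a)=a\cap\lambda$, well defined because $|a\cap\lambda|\le|a|<\kappa$. Let
\[
\A'=\{\pi^{-1}(A): A\subseteq\P_\kappa(\lambda)\}\subseteq\P(\P_\kappa(\lambda')).
\]
Since $\pi^{-1}$ is injective as a map on $\P(\P_\kappa(\lambda))$, we have $|\A'|\le 2^{|\P_\kappa(\lambda)|}=2^{\lambda^{<\kappa}}\le\lambda'$. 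Near $\lambda'$-strong compactness of $\kappa$ then yields a non-principal $(<\kappa)$-complete fine filter $\F$ on $\P_\kappa(\lambda')$ measuring $\A'$. Define
\[
U=\{A\subseteq\P_\kappa(\lambda):\pi^{-1}(A)\in\F\}.
\]

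Verifying that $U$ is a $(<\kappa)$-complete fine ultrafilter on $\P_\kappa(\lambda)$ is then a direct check, using that $\pi^{-1}$ commutes with intersections and complements. Upward closure, $(<\kappa)$-completeness and the fact that $\emptyset\notin U$ are immediate from the corresponding properties of $\F$. Fineness follows because for $\alpha<\lambda$ one has $\pi^{-1}(\{a\in\P_\kappa(\lambda):\alpha\in a\})=\{a\in\P_\kappa(\lambda'):\alpha\in a\}\in\F$. The decisive ultrafilter property uses exactly that $\F$ measures $\A'$: for any $A\subseteq\P_\kappa(\lambda)$, the set $\pi^{-1}(A)$ lies in $\A'$, so either $\pi^{-1}(A)\in\F$ or $\pi^{-1}(\P_\kappa(\lambda)\setminus A)=\P_\kappa(\lambda')\setminus\pi^{-1}(A)\in\F$, whence $A\in U$ or $\P_\kappa(\lambda)\setminus A\in U$. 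Thus $U$ witnesses $\lambda$-strong compactness, and since $\lambda\ge\kappa$ was arbitrary, $\kappa$ is strongly compact.

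The only genuine obstacle is cardinality bookkeeping. One must arrange that the family $\A'$ of all preimages of subsets of $\P_\kappa(\lambda)$ is small enough for a near $\lambda'$-strongly compact filter to measure it, which is precisely what forces the choice $\lambda'\ge 2^{\lambda^{<\kappa}}$; it is exactly for this reason that the hypothesis demands near $\lambda'$-strong compactness for unboundedly many $\lambda'$ rather than for some particular one. Once that inequality is secured, the pushforward argument is essentially mechanical.
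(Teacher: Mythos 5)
Your argument is correct, but it takes a genuinely different route from the paper. The paper's proof is a two-line appeal to the machinery already built: nearly $\lambda$-strong compactness is converted (via Theorem \ref{T:EmbCh} and Theorem \ref{T:LFEP2}) into the statement that $2^\lambda$ is $(<\kappa)$-compact in the $(<\kappa)$-box topology for unboundedly many $\lambda$, whence every $(<\kappa)$-box product of discrete two-point spaces is $(<\kappa)$-compact (being homeomorphic to a closed subspace of some such $2^\lambda$), and strong compactness then follows from the folklore Theorem \ref{T:SCEq}. You instead work directly with ultrafilters: for each $\lambda$ you pick $\lambda'\ge 2^{\lambda^{<\kappa}}$ in the unbounded class, apply near $\lambda'$-strong compactness to the family of \emph{all} preimages under the projection $\pi(a)=a\cap\lambda$, and push the resulting filter forward to get a $(<\kappa)$-complete fine ultrafilter on $\P_\kappa(\lambda)$ --- the point being that measuring every set in $\A'$ forces the pushforward to be an ultrafilter, and the unboundedness of the hypothesis is exactly what lets you absorb the $2^{\lambda^{<\kappa}}$-sized family. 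Your verifications (closure under $<\kappa$ intersections, fineness via $\pi^{-1}(B_\alpha^\lambda)=B_\alpha^{\lambda'}$, the complementation identity $\pi^{-1}(\P_\kappa(\lambda)\setminus A)=\P_\kappa(\lambda')\setminus\pi^{-1}(A)$) all check out, and the final step rests only on the standard Kanamori equivalence of $\lambda$-strong compactness with the existence of a fine $(<\kappa)$-complete ultrafilter on $\P_\kappa(\lambda)$. What your approach buys: it is self-contained modulo that one classical fact, it bypasses the unproved folklore Theorem \ref{T:SCEq} entirely, and it does not need the cardinal-arithmetic hypothesis $\lambda^{<\kappa}=\lambda$ that the paper's chain of equivalences formally requires (the paper's proof implicitly needs unboundedly many such $\lambda$ in the given class, a point it glosses over). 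What the paper's approach buys is brevity and thematic coherence with its topological characterisations. The only cosmetic omission on your side is that you do not explicitly note that $U$ is non-principal, but this is immediate from fineness and $\lambda\ge\kappa$, exactly as argued elsewhere in the paper.
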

\begin{proof} Indeed, from what is proved above, it follows that in
such a case, $2^{\lambda}$ is $(<\kappa)$-compact in the
$(<\kappa)$-box topology for unboundedly many $\lambda$. Consequently,
any $(<\kappa)$-box product of discrete two-point spaces is
$(<\kappa)$-compact, so that $\kappa$ is strongly compact.
\end{proof}

\ifx\undefined\bysame
\newcommand{\bysame}{\leavevmode\hbox to3em{\hrulefill}\,}
\fi

\end{document}